\newtheorem{thm}{Theorem}[section]
\newtheorem{lmm}[thm]{Lemma}
\newtheorem{prp}[thm]{Proposition}
\newtheorem{cor}[thm]{Corollary}
\theoremstyle{definition}
\newtheorem{dfn}[thm]{Definition}
\newtheorem{exm}[thm]{Example}
\theoremstyle{remark}
\newtheorem{rmr}[thm]{Remark}
\newcommand{\f}{\varphi}
\newcommand{\skp}[2]{\left<#1,#2\right>}
\newcommand{\id}{\mathop{\mathrm{id}}\nolimits}
\newcommand{\ind}{\mathop{\mathrm{ind}}\nolimits}
\newcommand\G{\mathrm{I}\!\mathrm{\Gamma}}
\renewcommand\L{\mathcal L}
\newcommand\K{\mathcal K}
\newcommand\e{\varepsilon}
\numberwithin{equation}{section}
\begin{document}

\title{On the index of product systems of Hilbert modules}

\author{Dragoljub J.\ Ke\v cki\'c}
\address{Faculty of Mathematics, University of Belgrade, Student\/ski trg 16-18, 11000 Beo\-grad, Serbia}
\email{keckic@matf.bg.ac.rs}

\author{Biljana Vujo\v sevi\'c}
\address{Faculty of Mathematics, University of Belgrade, Student\/ski trg 16-18, 11000 Beo\-grad, Serbia}
\email{bvujosevic@matf.bg.ac.rs}

\subjclass[2000]{Primary 46L53, 46L55; Secondary 60G20}

\date{}


\keywords{Product system, Hilbert module, index, noncommutative dynamics, quantum probability}

\begin{abstract}In this note we prove that the set of all uniformly continuous units on a product system over
a $C^*$ algebra $\mathcal B$ can be endowed with the structure of left right $\mathcal B$ - $\mathcal B$
Hilbert module after identifying similar units by the suitable equivalence relation. We use this construction
to define the index of the initial product system, and prove that it is the generalization of earlier defined
indices by Arveson (in the case $\mathcal B=\mathrm C$) and Skeide (in the case of spatial product system).
We prove that such defined index is a covariant functor from the category od continuous product systems to
the category of $\mathcal B$ bimodules. We also prove that the index is subadditive with respect to the outer
tensor product of product systems, and prove additional properties of the index of product systems that can
be embedded into a spatial one.
\end{abstract}

\maketitle

\section{Introduction}

Product systems over $\mathbb C$ have been studied during last several decades in connection with
$E_0$-semigroups acting on a type $I$ factor. Although the main problem of classification of all non
isomorphic product systems is still open, this theory is well developed. The reader is referred to book
\cite{Arv03} and references therein. In the present century there are some significant results that
generalizes this theory to product systems over some $C^*$-algebra $\mathcal B$, either in connection with
$E_0$ semigroups acting on $II_1$ factor (see \cite{JOT04}) or in connection with quantum probability
dynamics (see \cite{BS00}, \cite{JFA04}, \cite{SK03}).

There are many difficulties in generalizing the notion of index of a product system introduced in
\cite{Arv89} to this more general concept. Up to our knowledge there are two attempts in this direction. The
first was done in \cite{KPR06}, using redefining the notion of tensor product of two product systems in order
to retain well behaviour of index with respect to tensor product. The other attempt \cite{BJM11}, \cite{Bhat}
also uses the new concept of products, amalgamated product.

The main point of this note is to find the natural generalization of the index of product systems from
Arveson's $\mathbb C$-case to more general $C^*$-algebra case. In this purpose we consider the quotient set
$\mathcal U/\sim$ of all uniformly continuous units on the given product system $E$ by a suitable equivalence
relation, and prove that $\mathcal U/\sim$ carries a natural structure of two sided $\mathcal B - \mathcal B$
module.

Throughout the whole paper $\mathcal B$ will denote a unital $C^*$-algebra and $1$ will denote its unit.
Also, we shall use $\otimes$ for tensor product, either algebraic or other, although $\odot$ is also in
common use.

The rest of the introduction is devoted to basic definitions.

\begin{dfn} a) Product system over $C^*$-algebra $\mathcal B$ is a family $(E_t)_{t\ge0}$ of Hilbert $\mathcal
B-\mathcal B$ modules, with $E_0\cong\mathcal B$, and a family of (unitary) isomorphisms
$$\f_{t,s}:E_t\otimes E_s\to E_{t+s},$$
where $\otimes$ stands for the so called inner tensor product obtained by identifications $u b\otimes v\sim
u\otimes bv$, $u\otimes vb\sim(u\otimes v)b$, $bu\otimes v\sim b(u\otimes v)$, ($u\in E_t$, $v\in E_s$,
$b\in\mathcal B$) and then completing in the inner product $\skp{u\otimes v}{u_1\otimes v_1}=\skp v{\skp
u{u_1}v_1}$;

b) Unit on $E$ is a family $u_t\in E_t$, $t\ge0$, such that $u_0=1$ and $\f_{t,s}(u_t\otimes u_s)=u_{t+s}$,
which we shall abbreviate to $u_t\otimes u_s=u_{t+s}$. A unit $u_t$ is unital if $\skp{u_t}{u_t}=1$. It is
central if for all $b\in\mathcal B$ and all $t\ge0$ there holds $bu_t=u_tb$;
\end{dfn}

The previous definition does not include any technical condition, such as measurability, or continuity. It
occurs that it is sometimes more convenient to pose the continuity condition directly on units, although
there is a definition of continuous product system which we shall use in Section \ref{secdefind}.

\begin{dfn} Two units $u_t$ and $v_t$ give rise to the family of mappings $\K^{u,v}_t:\mathcal B\to\mathcal B$,
given by
$$\K^{u,v}_t(b)= \skp{u_t}{bv_t}.$$
All $\K^{u,v}_t$ are bounded $\mathbb C$-linear operators on $\mathcal B$, and this family forms a semigroup.
We say that the set of units $S$ is continuous if the corresponding semigroup $(K_t^{\xi,\eta})_{\xi,\eta\in
S}$ (with respect to Schur multiplying) is uniformly continuous. For a single unit $u_t$ we say that it is
uniformly continuous, or briefly just continuous, if the set $\{u\}$ is continuous, that is, the
corresponding family $\K^{u,u}_t$ is continuous in the norm of the space $L(\mathcal B)$.
\end{dfn}

Given a (uniformly) continuous set of units $\mathcal U$, we can form, as it was shown in \cite{JFA04} a
uniformly continuous completely positive definite semigroup ($CPD$-semigroup in further)
$\mathcal{K}=(\mathcal{K}_t)_{t\in{\mathbb{R}}_{+}}$

Denote by $\mathcal{L}=\frac{d}{dt}\mathcal{K} \mid _{t=0}$ the generator of CPD-semigroup $\mathcal{K}$. It
is well known \cite{JFA04} that $\mathcal{L}$ is conditionally completely positive definite, that is, for all
finite $n$-tuple $x_1,\dots,x_n\in\mathcal U$ and for all $a_j$, $b_j\in\mathcal B$ we have
\begin{equation}\label{CCPD}
\sum_{j=1}^na_jb_j=0\Longrightarrow\sum_{i,j=1}^nb_i^*\mathcal L^{x_i,x_j}(a_i^*a_j)b_j\ge0.
\end{equation}
It holds, also, $\mathcal L^{y,x}(b)=\mathcal L^{x,y}(b^*)^*$.

Also, $\K$ is uniquely determined by $\mathcal L$. More precisely, $\K$ can be recovered from $\mathcal L$ by
$\K=e^{t\mathcal L}$ using Schur product, i.e.
\begin{equation}\label{K preko L}
\K^{x,y}_t(b)=\skp{x_t}{by_t}=(\exp t\mathcal L^{x,y})(b).
\end{equation}

\begin{rmr}It should distinct the continuous set of units and the set of continuous units. In the second case
only $\K^{\xi,\xi}_t$ should be uniformly continuous for $\xi\in S$, whereas in the first case all
$\K_t^{\xi,\eta}$ should be uniformly continuous.
\end{rmr}

In Section \ref{secprel} we list and prove auxiliary statements that are necessary for the proofs of main
results. In Section \ref{secdefind} we define the notion of index of a given product system and prove its
functoriality from the category of continuous product system to the category of left-right $\mathcal B$ -
$\mathcal B$ Hilbert modules. Section \ref{sectensor} is devoted to outer tensor product of product systems
and to the behaviour of the index with respect to it. In Section \ref{secspatial} we discuss how the
existence of a central unit, either in the product system $E$ or in some its extension, affects the index.
All examples are left for the last Section \ref{secremarks}, as well as concluding remarks.

The proofs in this note requires technique specific for Hilbert $C^*$-modules reduced to a few initial
statements. Nevertheless, we refer the reader to books \cite{MT} and \cite{Lance} for elaborate approach to
this topic.

\section{Preliminary results}\label{secprel}

In \cite{PAMS08}, Liebscher and Skeide introduce an interesting way to obtain new units in a given product
system. The results are stated in Lemma 3.1, Proposition 3.3 and Lemma 3.4 of the mentioned paper. We briefly
quote them as

\begin{prp}\label{LS}a) Suppose that a continuous set $S$ of units generates a product system $E$. If $t\mapsto y_t\in
E_t$ is a mapping (not necessarily unit), with infinitesimal generators $K$ and $K_\xi\in L(\mathcal B)$
($\xi\in S$) in the sense that for all $b\in\mathcal B$ we have
$$\skp{y_t}{b y_t}=b+tK(b)+O(t^2),$$
$$\skp{y_t}{b\xi_t}=b+tK_\xi(b)+O(t^2).$$
Then there exists a product system $F\supseteq E$ and a unit $\zeta$ such that $S\cup\{\zeta\}$ is continuous
and
$$\mathcal L^{\zeta,\zeta}=K\mbox{ and }\mathcal L^{\zeta,\xi}=K_\xi.$$

b) Also, the following three conditions are mutually equivalent:
\begin{enumerate}
\item{} $\zeta\in S$;

\item{} $\zeta$ can be obtained as the norm limit of the sequence $(y_{t/n})^{\otimes n}$;

\item{} $\lim\limits_{n\to\infty}\skp{\zeta_t}{(y_{t/n})^{\otimes n}}=\skp{\zeta_t}{\zeta_t}$.
\end{enumerate}
\end{prp}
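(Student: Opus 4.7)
The plan for part (a) is to extend the CCPD kernel $\mathcal L$ of $S$ to a kernel on $S\cup\{\zeta\}$ by setting $\mathcal L^{\zeta,\zeta}:=K$, $\mathcal L^{\zeta,\xi}:=K_\xi$, and $\mathcal L^{\xi,\zeta}(b):=K_\xi(b^*)^*$ for $\xi\in S$, and then to apply the Barreto--Bhat--Liebscher--Skeide GNS construction for CPD semigroups. The crucial verification is that the extended kernel still satisfies \eqref{CCPD}. Given $x_1,\dots,x_n\in S\cup\{\zeta\}$ and $a_j,b_j\in\mathcal B$ with $\sum_ja_jb_j=0$, substitute the vector $y_t$ for each $\zeta$-label and $\xi_t$ for each $\xi\in S$, and consider
$$Q(t)=\sum_{i,j=1}^n b_i^*\skp{x_{i,t}}{a_i^*a_j x_{j,t}}b_j=\skp{\sum_i a_i x_{i,t} b_i}{\sum_j a_j x_{j,t} b_j}\ge0.$$
The hypothesis $\sum_j a_jb_j=0$ together with $x_{j,0}=1$ gives $Q(0)=0$, and differentiating at $t=0$ using the data $K$, $K_\xi$ and the existing $\mathcal L^{\xi,\eta}$ recovers exactly the left-hand side of \eqref{CCPD}, which is therefore non-negative. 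Exponentiating by the Schur product as in \eqref{K preko L} produces a uniformly continuous CPD semigroup on $S\cup\{\zeta\}$, and the GNS construction applied to it yields a product system $F$ carrying units indexed by $S\cup\{\zeta\}$ with the prescribed generator; uniqueness of the construction applied to the restriction to $S$ provides the embedding $E\hookrightarrow F$.

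For part (b), $(2)\Rightarrow(3)$ is immediate from continuity of the inner product. For $(3)\Rightarrow(2)$ I would expand
$$\|\zeta_t-y_{t/n}^{\otimes n}\|^2=\skp{\zeta_t}{\zeta_t}-\skp{\zeta_t}{y_{t/n}^{\otimes n}}-\skp{y_{t/n}^{\otimes n}}{\zeta_t}+\skp{y_{t/n}^{\otimes n}}{y_{t/n}^{\otimes n}}$$
and observe that, writing $\Phi_s(b):=\skp{y_s}{by_s}=b+sK(b)+O(s^2)$, an easy induction on the inner-tensor formula yields $\skp{y_{t/n}^{\otimes n}}{y_{t/n}^{\otimes n}}=\Phi_{t/n}^{\,n}(1)\to e^{tK}(1)=\skp{\zeta_t}{\zeta_t}$; together with hypothesis (3) on the cross terms this drives the right-hand side to zero. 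The implication $(1)\Rightarrow(3)$ is analogous, iterating $A_s(b):=\skp{\zeta_s}{by_s}$ and using $\zeta_{t/n}^{\otimes n}=\zeta_t$ on the left-hand slot. Finally $(2)\Rightarrow(1)$: each $y_{t/n}^{\otimes n}$ lies in $E_t$, hence so does the norm limit $\zeta_t$; the unit property $\zeta_t\otimes\zeta_s=\zeta_{t+s}$ follows from a block-splitting of $y_{(t+s)/N}^{\otimes N}$ with the two blocks of lengths $\approx Nt/(t+s)$ and $\approx Ns/(t+s)$, and continuity of all Schur exponentials involving $\zeta$ then places $\zeta$ in the continuous set $S$.

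The main obstacle I anticipate is the CCPD verification for the extended kernel, combined with the correct invocation of the GNS construction for CPD semigroups that simultaneously delivers the enlarged product system together with the embedding $E\hookrightarrow F$; once those are in place, the iterated inner-tensor computations in part (b) are delicate but mechanical, being essentially applications of the semigroup identity $\Phi_{t/n}^{\,n}\to e^{tK}$ applied to the various two-point generators.
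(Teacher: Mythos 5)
This proposition is not proved in the paper at all: it is explicitly a quotation of Lemma 3.1, Proposition 3.3 and Lemma 3.4 of Liebscher--Skeide \cite{PAMS08}, so there is no internal proof to compare against. Your reconstruction follows essentially the same route as the cited source: part (a) reduces to checking that the kernel on $S\cup\{\zeta\}$ extended by $K$, $K_\xi$ and $K_\xi(b^*)^*$ is still conditionally completely positive definite, which you obtain correctly by differentiating the manifestly positive quantity $Q(t)=\skp{\sum_i a_ix_{i,t}b_i}{\sum_j a_jx_{j,t}b_j}$ at $t=0$ (using $Q(0)=0$ from $\sum_j a_jb_j=0$), followed by the Schoenberg-type correspondence and GNS construction for CPD-semigroups of \cite{JFA04}, with the embedding $E\hookrightarrow F$ coming from uniqueness of that construction on the generating set $S$. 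Part (b) via the Trotter-type iterations $\skp{y_{t/n}^{\otimes n}}{b\,y_{t/n}^{\otimes n}}=\Phi_{t/n}^{\,n}(b)\to e^{tK}(b)$ and $\skp{\zeta_t}{b\,y_{t/n}^{\otimes n}}=A_{t/n}^{\,n}(b)$ is also the standard argument; the only points deserving explicit care are the uniform control of the $O(t^2)$ remainders needed to justify $\Phi_{t/n}^{\,n}\to e^{tK}$ in operator norm, and the reading of condition (1), where ``$\zeta\in S$'' should be understood as $\zeta$ being a unit of $E$ itself with $S\cup\{\zeta\}$ continuous (your closedness-of-$E_t$ argument gives exactly this). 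In short, the proposal is correct and coincides in strategy with the external proof the paper relies on.
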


\begin{rmr}In \cite{PAMS08} there was considered more general limit over the filter of all partitions of
segment $[0,t]$ instead of $\lim_{n\to\infty}(y_{t/n})^{\otimes n}$. However, we do not need such general
context.
\end{rmr}

These results are used, in the same paper, to construct units starting with mappings
$$t\mapsto\sum_{j=1}^n\varkappa_j x_t^j,\qquad\varkappa_j\in\mathbb C,\quad\sum_{j=1}^n\varkappa_j=1$$
and
\begin{equation}\label{x na b}
t\mapsto x_te^{\beta t},
\end{equation}
where $x,x^1,\dots,x^n\in S$, $\beta\in\mathcal B$, proving, also, that the resulting units, denoted by
$\varkappa_1x^1\boxplus\dots\boxplus \varkappa_nx^n$ and $x^\beta$, respectively, belongs to $S$. (Obviously
the same unit $x^\beta$ is obtained if we start with mapping $t\mapsto e^{\beta t}x_t$ instead of (\ref{x na
b}), since both of them have the same generators $\L^{x,y}(b)+\beta^*b$, $\L^{x,x}(b)+\beta^*b+b\beta$.) It
was, also, noted that the product $\boxplus$ is associative unless the expression does not make sense. In
fact, it holds
$$\varkappa_1x^1\boxplus\varkappa_2x^2\boxplus\varkappa_3x^3=
(\varkappa_1+\varkappa_2)\left(\frac{\varkappa_1}{\varkappa_1+\varkappa_2}x^1\boxplus\frac{\varkappa_2}{\varkappa_1+\varkappa_2}x^2\right)
    \boxplus\varkappa_3x^3,$$
provided that $\varkappa_1+\varkappa_2\neq0$, and a similar equality
$$\varkappa_1x^1\boxplus\varkappa_2x^2\boxplus\varkappa_3x^3=\varkappa_1x^1\boxplus
(\varkappa_2+\varkappa_3)\left(\frac{\varkappa_2}{\varkappa_2+\varkappa_3}x^2\boxplus\frac{\varkappa_3}{\varkappa_2+\varkappa_3}x^3\right)
$$
provided that $\varkappa_2+\varkappa_3\neq0$.

The kernels of $x^\beta$ are given by
\begin{equation}\label{stepenovanje beta}\begin{gathered}
\mathcal
L^{x^\beta,x^\beta}=\mathcal{L}^{x,x}+\beta^{*}\mathrm{id}_{\mathcal{B}}+\mathrm{id}_{\mathcal{B}}\beta,\\
\mathcal L^{x^\beta,\xi}=\mathcal{L}^{x,{\xi}}+{\beta}^{*}\mathrm{id}_{\mathcal{B}}.
\end{gathered}\end{equation}

For our purpose, however, it is useful to substitute the complex numbers $\varkappa_j$ by elements of
$\mathcal B$. In other words we have

\begin{prp} Suppose that a continuous set $S$ of units generates a product system $E$. Let $x^j\in S$, let
let $\varkappa_j\in\mathcal B$, $j=1,\dots,n$ and let $\sum\varkappa_j=1$. Then the functions
$$t\mapsto\sum_{j=1}^n\varkappa_jx_t^j\quad\mbox{ and }t\mapsto\sum_{j=1}^nx_t^j\varkappa_j$$
satisfy all assumptions of Proposition \ref{LS}, and the resulting units belong to $S$. We shall denote them
by
$$\varkappa_1x^1\boxplus\dots\varkappa_nx^n\quad\mbox{ and }x^1\varkappa_1\boxplus\dots\boxplus
x^n\varkappa_n.$$
\end{prp}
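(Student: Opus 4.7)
The plan is to verify the Taylor-expansion hypotheses of Proposition \ref{LS}(a) for $y_t = \sum_j \varkappa_j x_t^j$, thereby producing a unit $\zeta$ in some enlargement $F \supseteq E$, and then to establish $\zeta \in S$ by checking condition (3) of Proposition \ref{LS}(b) via a Lie--Trotter type limit. The argument for the right-multiplied map $z_t = \sum_j x_t^j \varkappa_j$ is entirely parallel.

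The first step is essentially an algebraic computation. Since the left action on a Hilbert $\mathcal{B}$-$\mathcal{B}$ module is by a $*$-representation, one has $\skp{\varkappa u}{v} = \skp{u}{\varkappa^* v}$. Combining this with the right-linearity of the inner product and the fact that $\K_t^{x^i,x^j}$ form uniformly continuous semigroups with generators $\L^{x^i,x^j}$ yields
\begin{equation*}
\skp{y_t}{by_t} = \sum_{i,j} \K_t^{x^i,x^j}(\varkappa_i^* b \varkappa_j), \qquad \skp{y_t}{b\xi_t} = \sum_i \K_t^{x^i,\xi}(\varkappa_i^* b).
\end{equation*}
Using $\sum_j \varkappa_j = 1$ both expressions reduce to $b$ at $t=0$, and their derivatives supply the candidate generators
\begin{equation*}
K(b) = \sum_{i,j} \L^{x^i,x^j}(\varkappa_i^* b \varkappa_j), \qquad K_\xi(b) = \sum_i \L^{x^i,\xi}(\varkappa_i^* b).
\end{equation*}
Proposition \ref{LS}(a) then produces $F \supseteq E$ and a unit $\zeta$ in $F$ with $\L^{\zeta,\zeta} = K$ and $\L^{\zeta,\xi} = K_\xi$ for every $\xi \in S$.

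For the second step, introduce $T_s \in L(\mathcal{B})$ defined by $T_s(b) = \skp{\zeta_s}{by_s} = \sum_j \K_s^{\zeta,x^j}(b\varkappa_j)$. Iterating the identification $\zeta_t = \zeta_{t/n}^{\otimes n}$ together with the inner-tensor-product formula $\skp{u\otimes u'}{v\otimes v'} = \skp{u'}{\skp{u}{v}v'}$ gives $\skp{\zeta_t}{(y_{t/n})^{\otimes n}} = T_{t/n}^n(1)$. Expanding $\K^{\zeta,x^j}_s = \id + s\L^{\zeta,x^j} + O(s^2)$ in operator norm and collecting terms shows $T_s = \id + sK + O(s^2)$. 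A standard Lie--Trotter telescoping
\begin{equation*}
T_{t/n}^n - e^{tK} = \sum_{k=0}^{n-1} T_{t/n}^{k}\bigl(T_{t/n} - e^{(t/n)K}\bigr)e^{(n-k-1)(t/n)K}
\end{equation*}
bounded by $n \cdot O(1/n^2)$ yields $T_{t/n}^n(1) \to e^{tK}(1) = \skp{\zeta_t}{\zeta_t}$. This is precisely condition (3), so $\zeta \in S$. The analogous calculation for $z_t$ produces generators $K(b) = \sum_{i,j} \varkappa_i^* \L^{x^i,x^j}(b) \varkappa_j$ and $K_\xi(b) = \sum_i \varkappa_i^* \L^{x^i,\xi}(b)$, and the same Trotter argument closes the case. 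The main obstacle is exactly this Trotter passage: the infinitesimal data drop out immediately from the module algebra, but converting them into a genuine norm limit of $T_{t/n}^n(1)$ requires the uniform bound on $\|T_{t/n}\|^k$ for $0 \le k \le n$, which in turn rests on the uniform continuity of $S \cup \{\zeta\}$ and hence the boundedness of the $\L^{\zeta,x^j}$ as operators on $\mathcal{B}$.
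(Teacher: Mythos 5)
Your proposal is correct and follows essentially the same route as the paper: you compute exactly the same infinitesimal generators $K$ and $K_\xi$ (the paper records them, for $n=2$, via the operators $L_{\varkappa_i^*}$ and $R_{\varkappa_j}$) and then invoke Proposition \ref{LS} — the paper simply omits the verification of condition (3), deferring to Liebscher--Skeide, whereas you carry out that Trotter-type limit explicitly. The extra detail is sound, so there is nothing to correct.
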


\begin{proof} The proof is essentially the same as in \cite{PAMS08} and we shall omit it. We only give the
sketch of proof in the case $n=2$.

The infinitesimal generators of the mappings
$$t\mapsto x_t^1\varkappa_1+x_t^2\varkappa_2\quad\mbox{ and}\quad t\mapsto \varkappa_1 x_t^1+\varkappa_2 x_t^2$$
are
$$
K=\varkappa_1^*\mathcal{L}^{x^1,x^1}\varkappa_1+\varkappa_1^*\mathcal{L}^{x^1,x^2}\varkappa_2+\varkappa_2^*\mathcal{L}^{x^2,x^1}\varkappa_1
+\varkappa_2^*\mathcal{L}^{x^2,x^2}\varkappa_2,$$
$$K_{\xi}=\varkappa_1^*\mathcal{L}^{x^1,\xi}+\varkappa_2^*\mathcal{L}^{x^2,\xi}$$
and
$$
K=\mathcal{L}^{x^1,x^1} L_{\varkappa_1^*}  R_{\varkappa_1}+\mathcal{L}^{x^1,x^2} L_{\varkappa_1^*} R_{\varkappa_2}+\mathcal{L}^{x^2,x^1} L_{\varkappa_2^*} R_{\varkappa_1}+\mathcal{L}^{x^2,x^2} L_{\varkappa_2^*} R_{\varkappa_2},$$
$$K_{\xi}=\mathcal{L}^{x^1,\xi} L_{\varkappa_1^*}+\mathcal{L}^{x^2,\xi} L_{\varkappa_2^*},$$
where $L_a,R_a:\mathcal{B}\rightarrow\mathcal{B}$ are the left and right multiplication operators for
$a\in\mathcal B$.

It is not difficult to verify all assumptions stated in Proposition \ref{LS}.
\end{proof}

We, also, need the following Lemmata.

\begin{lmm}\label{lema}Let $\mathcal U$ be the set of all uniformly continuous units on a given product system $E$, and
let $x,y\in\mathcal U$. If for all $\xi\in\mathcal U$ there holds $\mathcal L^{x,\xi}=\mathcal L^{y,\xi}$
then $x=y$.
\end{lmm}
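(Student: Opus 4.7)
The plan is to leverage the bijection between $\mathcal L^{\cdot,\cdot}$ and $\mathcal K^{\cdot,\cdot}_t$ given by (\ref{K preko L}) to upgrade the equality of generators to equality of the corresponding inner products $\langle x_t,b\xi_t\rangle = \langle y_t,b\xi_t\rangle$, and then choose $\xi$ cleverly (namely, $\xi=x$ and $\xi=y$) to isolate $\langle x_t-y_t,x_t-y_t\rangle$.

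More precisely, first I would note that the hypothesis $\mathcal L^{x,\xi}=\mathcal L^{y,\xi}$ for every $\xi\in\mathcal U$ implies, via $\mathcal K_t^{\cdot,\cdot} = \exp(t\mathcal L^{\cdot,\cdot})$ applied with Schur multiplication, that
\[
\mathcal K^{x,\xi}_t = \mathcal K^{y,\xi}_t\qquad\text{for every }\xi\in\mathcal U,\ t\ge0.
\]
Unpacking the definition of $\mathcal K$, this reads $\langle x_t,b\xi_t\rangle = \langle y_t,b\xi_t\rangle$ for every $b\in\mathcal B$, $\xi\in\mathcal U$, $t\ge0$; equivalently,
\[
\langle x_t-y_t,\, b\xi_t\rangle = 0 \qquad (b\in\mathcal B,\ \xi\in\mathcal U,\ t\ge0).
\]

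Next, I would specialize $\xi$ to $x$ and then to $y$ (both of which lie in $\mathcal U$), obtaining $\langle x_t-y_t,bx_t\rangle=0$ and $\langle x_t-y_t,by_t\rangle=0$. Subtracting yields $\langle x_t-y_t,b(x_t-y_t)\rangle = 0$ for all $b\in\mathcal B$. Taking $b=1$ gives $\langle x_t-y_t,x_t-y_t\rangle = 0$, so $x_t = y_t$ for every $t\ge0$, i.e.\ $x=y$ as units.

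There is no real obstacle here: the whole argument is one line once one recognizes that the hypothesis concerns the pair $(x,\xi)$ with $\xi$ free, so both $\xi=x$ and $\xi=y$ may be plugged in. The only subtlety worth flagging is that the assumption $\mathcal L^{x,\xi}=\mathcal L^{y,\xi}$ is asymmetric (the ``free'' unit sits in the second slot of the kernel), which is exactly what forces the use of the linearity of the $\mathcal B$-valued inner product in its second argument to subtract and extract $\langle x_t-y_t,b(x_t-y_t)\rangle$.
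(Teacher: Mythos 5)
Your proposal is correct and follows essentially the same route as the paper: invoke (\ref{K preko L}) to pass from equality of generators to $\skp{x_t}{b\xi_t}=\skp{y_t}{b\xi_t}$, then specialize to $\xi=x$ and $\xi=y$ and combine to get $\skp{x_t-y_t}{x_t-y_t}=0$. The only cosmetic difference is that the paper sets $b=1$ immediately rather than carrying a general $b$ through the subtraction.
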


\begin{proof} From (\ref{K preko L}) we obtain
$$\skp{x_t}{b\xi_t}=\skp{y_t}{b\xi_t}.$$
For $b=1$, $\xi=x$, it becomes $\skp{x_t}{x_t}=\skp{y_t}{x_t}$, and for $b=1$, $\xi=y$ it becomes
$\skp{x_t}{y_t}=\skp{y_t}{y_t}$. Combining the last two relations we find
$\skp{x_t-y_t}{x_t-y_t}=0$.\end{proof}

\begin{rmr}\label{lemaremark}The assumption "for all $\xi\in\mathcal U$" is superfluous in the previous Lemma.
We only use the equality for $\xi=x$ and $\xi=y$.
\end{rmr}

\begin{lmm}\label{normirana jedinica}
Let $x$ be a continuous unit on some product system $E$. Then $x^{-\beta/2}$ is a unital unit, where
$\beta=\L^{x,x}(1)$. Moreover, if $x$ is central, then $x^{-\beta/2}$ is central, as well.
\end{lmm}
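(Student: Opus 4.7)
The plan is to read $\skp{x_t^{-\beta/2}}{x_t^{-\beta/2}}$ off the exponential representation (\ref{K preko L}) using the kernel formula (\ref{stepenovanje beta}). A preliminary observation is that $\beta = \L^{x,x}(1)$ is self-adjoint: taking $y = x$ and $b = 1$ in the identity $\L^{y,x}(b) = \L^{x,y}(b^*)^*$ recorded just after (\ref{CCPD}) gives $\beta^* = \beta$. Specialising the first line of (\ref{stepenovanje beta}) to $\beta \mapsto -\beta/2$ and evaluating at $1$ therefore yields
$$\L^{x^{-\beta/2},x^{-\beta/2}}(1) = \L^{x,x}(1) - \tfrac12\beta^* - \tfrac12\beta = \beta - \tfrac12\beta - \tfrac12\beta = 0.$$
Since $1$ is annihilated by this generator, every higher power is as well, so (\ref{K preko L}) collapses the exponential series to $\skp{x_t^{-\beta/2}}{x_t^{-\beta/2}} = 1$, which is unitality.

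For centrality, the strategy is to exhibit $x^{-\beta/2}_t$ explicitly as $x_t e^{-\beta t/2}$. First I would verify that $\beta$ lies in the centre of $\mathcal B$ whenever $x$ is central: computing $\skp{x_t}{bx_t}$ in two ways --- once as $\skp{x_t}{x_t b} = \skp{x_t}{x_t} b$ (using centrality of $x$ on the right argument and right linearity of the inner product) and once as $\skp{b^* x_t}{x_t} = \skp{x_t b^*}{x_t} = b\skp{x_t}{x_t}$ (using adjointability of the left action and then centrality of $x$ on the left argument) --- gives $b\skp{x_t}{x_t} = \skp{x_t}{x_t} b$ for every $b \in \mathcal B$; differentiating at $t = 0$ places $\beta$ in the norm-closed centre, so $e^{-\beta t/2}$ is central too. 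A direct calculation with the identifications $ub \otimes v \sim u \otimes bv$ and $u \otimes vb \sim (u \otimes v) b$, combined with centrality of both $x$ and $\beta$, then shows that $y_t := x_t e^{-\beta t/2}$ satisfies $y_t \otimes y_s = y_{t+s}$, i.e.\ $y$ is itself a (manifestly central) unit on $E$. Finally, by Proposition \ref{LS}(b)(2) the unit $x^{-\beta/2}$ equals the norm limit of $(y_{t/n})^{\otimes n}$, and since $y$ is already a unit this sequence is constantly $y_t$; hence $x^{-\beta/2}_t = y_t = x_t e^{-\beta t/2}$, which is central.

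The main technical hurdle will be the centrality of $\beta$ and the consequent verification that $y_t = x_t e^{-\beta t/2}$ is itself a unit; once these are in hand, the identification $x^{-\beta/2}_t = y_t$ via Proposition \ref{LS}(b)(2) is immediate, and the rest of the argument reduces to bookkeeping with (\ref{stepenovanje beta}) and (\ref{K preko L}).
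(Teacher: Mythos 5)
Your proof is correct and follows essentially the same route as the paper: self-adjointness of $\beta$ from $\L^{y,x}(b)=(\L^{x,y}(b^*))^*$, vanishing of $\L^{x^{-\beta/2},x^{-\beta/2}}(1)$ via (\ref{stepenovanje beta}) to collapse the exponential in (\ref{K preko L}), and centrality of $\beta$ from comparing $b\skp{x_t}{x_t}$ with $\skp{x_t}{x_t}b$. You merely spell out (via Proposition \ref{LS}(b)) the identification $x^{-\beta/2}_t=x_te^{-t\beta/2}$ that the paper dismisses as ``easy to see.''
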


\begin{proof} Since it always holds
\begin{equation}\label{obrat}\L^{y,x}(b)=(\L^{x,y}(b^*))^*
\end{equation}
we get $\beta=\beta^*$. Further, by (\ref{stepenovanje beta}) we obtain
$\L^{x^{-\beta/2},x^{-\beta/2}}(1)=\L^{x,x}(1)-\beta^*/2-\beta/2=0$, and therefore
$$\skp{x_t^{-\beta/2}}{x_t^{-\beta/2}}=\left(\exp t\L^{x^{-\beta/2},x^{-\beta/2}}\right)(1)=1.$$

If, in addition, $x$ is central, then
$$be^{t\beta}=b\skp{x_t}{x_t}=\skp{x_tb^*}{x_t}=\skp{x_t}{x_tb}=e^{t\beta}b,$$
which implies $b\beta=\beta b$ for all $b\in\mathcal B$. As it is easy to see,
$x^{-\beta/2}_t=x_te^{-t\beta/2}$ and we conclude that $x^{-\beta/2}$ is, also, central.
\end{proof}

\section{Definition of index}\label{secdefind}

Let $E$ be a product system. We define the index as a quotient of a certain set of continuous units on $E$ by
a suitable inner product. Thus, the index is defined rather as operand on a set of continuous units then on a
product system. However, choosing a reference unit $\omega$, there is a maximal continuous set of units
$\mathcal U_\omega$ that contains $\omega$ (in the product system $E$) - see next Proposition. Therefore we
refer the index as $\ind(E,\omega)$ and show that it is independent on the choice of $\omega$ in the same
continuous set of units.

\begin{prp}Let $\mathcal U$ denote the set of all continuous units on some product system $E$. We define the
relation $\sim$ on $\mathcal U$ by
$$x\sim y\Leftrightarrow \{x,y\}\mbox{ is a continuous set.}$$
This relation is an equivalence relation.
\end{prp}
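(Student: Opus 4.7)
The plan is to verify the three properties of an equivalence relation. Reflexivity holds because $\{x,x\}=\{x\}$ is continuous by the hypothesis $x\in\mathcal U$; symmetry is immediate since the set $\{x,y\}$ is unordered. All of the substance lies in transitivity: assuming $\{x,y\}$ and $\{y,z\}$ are continuous, I must show $\{x,z\}$ is continuous.

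The kernels $\K^{x,x}_t$ and $\K^{z,z}_t$ are already uniformly continuous because $x,z\in\mathcal U$, and $\K^{z,x}_t$ is uniformly continuous iff $\K^{x,z}_t$ is (using $\skp{u}{v}^{*}=\skp{v}{u}$, which yields $\K^{z,x}_t(b)=\K^{x,z}_t(b^{*})^{*}$). So transitivity reduces to showing $\|\K^{x,z}_t-\id\|_{L(\mathcal B)}\to 0$ as $t\to 0^{+}$; the composition semigroup identity $\K^{x,z}_{t+s}=\K^{x,z}_s\circ\K^{x,z}_t$ (obtained by unfolding $\skp{x_t\otimes x_s}{b(z_t\otimes z_s)}$ with the inner tensor inner product) then upgrades this pointwise-at-$0$ statement to uniform continuity on $[0,\infty)$ automatically.

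The key move is to use $y$ as a bridge and apply Cauchy--Schwarz in the Hilbert $\mathcal B$-module $E_t$. I would write
\[
\skp{x_t}{bz_t}-b=\skp{x_t}{b(z_t-y_t)}+\bigl(\skp{x_t}{by_t}-b\bigr),
\]
estimate the first summand by $\|x_t\|\|b\|\|z_t-y_t\|$, and take supremum over $\|b\|\le 1$. The factor $\|x_t\|$ stays bounded near $0$ since $\|x_t\|^{2}=\|\K^{x,x}_t(1)\|\to 1$; the second summand vanishes uniformly in $\|b\|\le 1$ because $\{x,y\}$ is continuous; and expanding
\[
\|z_t-y_t\|^{2}=\|\K^{z,z}_t(1)-\K^{z,y}_t(1)-\K^{y,z}_t(1)+\K^{y,y}_t(1)\|
\]
shows $\|z_t-y_t\|\to 0$ from continuity of $\{y,z\}$ together with $y,z\in\mathcal U$. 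Combining the three facts yields $\|\K^{x,z}_t-\id\|\to 0$.

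The only mildly delicate aspect is reconciling the definition: confirming that ``the semigroup $(\K^{\xi,\eta}_t)$ with respect to Schur multiplying is uniformly continuous'' for $S=\{x,z\}$ really amounts to componentwise norm uniform continuity of the four kernels $\K^{x,x},\K^{x,z},\K^{z,x},\K^{z,z}$. Once this is granted, the remainder is a one-line norm estimate; in particular, one does not need the Liebscher--Skeide $\boxplus$ machinery here.
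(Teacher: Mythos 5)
Your proof is correct and follows essentially the same route as the paper: reflexivity and symmetry are immediate, and transitivity is obtained by inserting the middle unit as a bridge, showing the norm of the difference of the two bridged units tends to $0$ by expanding it as a combination of four kernels, and then applying Cauchy--Schwarz in the Hilbert module to kill the cross term. The only cosmetic difference is that you place the difference $z_t-y_t$ in the second slot of the inner product while the paper places $\xi_t-\eta_t$ in the first slot; the argument is otherwise identical.
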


\begin{proof} This relation is obviously reflexive and symmetric. We have only to prove that it is transitive,
i.e.\ that $\{\xi,\eta\}$ and $\{\eta,\zeta\}$ are continuous sets implies that $\{\xi,\zeta\}$ is also a
continuous set.

It suffices to prove the uniform continuity of the mapping $b\mapsto\skp{\xi_t}{b\zeta_t}$, at $t=0$. We
begin considering the difference $\xi_t-\eta_t$. Choosing $b=1$ we have
$$\skp{\xi_t-\eta_t}{\xi_t-\eta_t}=\K^{\xi,\xi}(1)-\K^{\xi,\eta}_t(1)-\K^{\eta,\xi}_t(1)+\K^{\eta,\eta}_t(1)\to1-1-1+1=0.$$

We have, also
$$\skp{\xi_t}{b\zeta_t}=\skp{\xi_t-\eta_t}{b\zeta_t}+\skp{\eta_t}{b\zeta_t}.$$
The second summand is uniformly continuous in $b$, by continuity of $\{\eta,\zeta\}$ and, consequently, tends
to $b$ (as $t\to0+$), whereas for the first summand the following estimate holds
$$||\skp{\xi_t-\eta_t}{b\zeta_t}||\le||\xi_t-\eta_t||\;||b\zeta_t||\le C||\xi_t-\eta_t||\to0.$$

(See, also the proof of \cite[Lemma 4.4.11]{JFA04}.)\end{proof}

Thus, the set $\mathcal U$ can be decomposed into mutually disjoint collection of maximal continuous set of
units.

Let $E$ be a product system over a unital $C^*$-algebra $\mathcal B$ with at least one continuous unit. (In
view of \cite[Definition 4.4]{SK03} this means that $E$ is non type $III$ product system.) Further, let
$\omega$ be an arbitrary continuous unit in $E$ and let $\mathcal U=\mathcal U_\omega$ be the set of all
uniformly continuous units that are equivalent to $\omega$.

We define the addition and multiplication by $b\in\mathcal B$ on $\mathcal U_\omega$ by
\begin{equation}\label{operacije}
x+y=x\boxplus y\boxplus-\omega,\quad b\cdot x=bx\boxplus(1-b)\omega,\quad x\cdot b=xb\boxplus\omega(1-b).
\end{equation}

The kernels of $x+y$, $x\cdot a$, $a\cdot x$ are
\begin{equation}\label{sabiranje}\begin{gathered}
\mathcal
L^{x+y,x+y}=\mathcal{L}^{x,x}+\mathcal{L}^{x,y}-\mathcal{L}^{x,\omega}+\mathcal{L}^{y,x}+\mathcal{L}^{y,y}-\mathcal{L}^{y,\omega}-
\mathcal{L}^{\omega,x}-\mathcal{L}^{\omega,y}+\mathcal{L}^{\omega,\omega},\\
\mathcal L^{x+y,\xi}=\mathcal{L}^{x,\xi}+\mathcal{L}^{y,\xi}-\mathcal{L}^{\omega,\xi},
\end{gathered}\end{equation}
\begin{equation}\label{mnozenje}\begin{gathered}
\mathcal L^{x\cdot a,x\cdot a}=a^{*}\mathcal{L}^{x,x}a+(1-a)^{*}\mathcal{L}^{\omega,x} a+a^{*}
\mathcal{L}^{x,\omega}(1-a)+ (1-a)^{*}\mathcal{L}^{\omega,\omega}(1-a),\\
\mathcal L^{x\cdot
a,\xi}=a^{*}\mathcal{L}^{x,\xi}+(1-a)^{*}\mathcal{L}^{\omega,\xi},\ \xi \in \mathcal{U},$$
\end{gathered}\end{equation}
\begin{equation}\label{mnozenje-levo}\begin{gathered}
\mathcal L^{a\cdot x,a\cdot x}=\mathcal{L}^{x,x}L_{a^*}R_a+\mathcal{L}^{\omega,x}L_{1-a^*}R_a+
\mathcal{L}^{x,\omega}L_{a^*}R_{1-a}+\mathcal{L}^{\omega,\omega}L_{1-a^*}R_{1-a},\\
\mathcal L^{a\cdot x,\xi}=\mathcal{L}^{x,\xi}L_{a^*}+\mathcal{L}^{\omega,\xi}L_{1-a^*},\ \xi \in
\mathcal{U}.$$
\end{gathered}\end{equation}

We, also, define an equivalence relation $\approx$ by: $x\approx y$ if and only if $x=y^\beta$ for some
$\beta\in\mathcal B$.

\begin{thm}\label{modul}
a) The set $\mathcal U$ with respect to operations defined by (\ref{operacije}) is a left-right $\mathcal
B-\mathcal B$ module.

b) The relation $\approx$ is an equivalence relation and it is compatible with all algebraic operations in
$\mathcal U$, i.e.
$$x\approx y\Longrightarrow x\cdot b\approx y\cdot b,\:b\cdot x\approx b\cdot y,$$
$$x^1\approx x^2, y^1\approx y^2\Longrightarrow x^1+y^1\approx x^2+y^2;$$
\end{thm}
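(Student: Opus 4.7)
The overall plan is to reduce every claim to a bookkeeping exercise at the level of the kernels $\mathcal L^{\cdot,\xi}$, using Lemma \ref{lema} (together with Remark \ref{lemaremark}) to turn equalities of kernels into equalities of units. It will be notationally convenient to set $T_x(\xi):=\mathcal L^{x,\xi}-\mathcal L^{\omega,\xi}$; a direct reading of (\ref{sabiranje}), (\ref{mnozenje}) and (\ref{mnozenje-levo}) then gives
\begin{equation*}
T_{x+y}=T_x+T_y,\quad T_\omega=0,\quad T_{x\cdot a}=a^*T_x,\quad T_{a\cdot x}=T_x L_{a^*},
\end{equation*}
where $a^*$ acts on kernels by left multiplication and $L_{a^*}$ by precomposition. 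Well-definedness of the three operations, i.e.\ that they land in $\mathcal U_\omega$, is provided by the Proposition preceding Lemma \ref{lema}, since the $\mathcal B$-coefficients inside each $\boxplus$ add up to $1$ in every case.

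For part (a), every module axiom collapses under $T$ to a triviality. Commutativity and associativity of $+$ are immediate from the symmetric form of $T_{x+y}$; the neutral element is $\omega$ because $T_\omega=0$; and the additive inverse of $x$ is $(-1)\cdot x$, since $T_{(-1)\cdot x}=-T_x$ forces $T_{x+(-1)\cdot x}=0=T_\omega$ and hence $x+(-1)\cdot x=\omega$ by Lemma \ref{lema}. The right-module axioms become the tautologies $a^*(T_x+T_y)=a^*T_x+a^*T_y$, $(a+b)^*T_x=a^*T_x+b^*T_x$, $(ab)^*T_x=b^*(a^*T_x)$ and $1^*T_x=T_x$; the left-module axioms have the same form with $L_{a^*}$ on the right; and the bimodule compatibility $a\cdot(x\cdot b)=(a\cdot x)\cdot b$ reads $b^*(T_x L_{a^*})=(b^*T_x)L_{a^*}$, i.e.\ associativity of operator composition. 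The reversal $a\mapsto a^*$ is harmless because it cancels whenever two scalars collide.

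For part (b), I would start from the observation that (\ref{stepenovanje beta}) combined with Lemma \ref{lema} immediately yields $(y^\beta)^\gamma=y^{\beta+\gamma}$ and $y^0=y$; reflexivity, symmetry (with witness $-\beta$) and transitivity (with witness $\beta+\gamma$) of $\approx$ are then automatic. For the compatibility, suppose $x=y^\beta$, so that $\mathcal L^{x,\xi}-\mathcal L^{y,\xi}=\beta^*\id$; then (\ref{mnozenje}) gives
\begin{equation*}
\mathcal L^{x\cdot a,\xi}-\mathcal L^{y\cdot a,\xi}=a^*\beta^*\id=(\beta a)^*\id,
\end{equation*}
so $x\cdot a=(y\cdot a)^{\beta a}\approx y\cdot a$ by Lemma \ref{lema}. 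The parallel computations using (\ref{mnozenje-levo}) and (\ref{sabiranje}) produce $b\cdot x=(b\cdot y)^{b\beta}$, and, if $x^i=(y^i)^{\beta_i}$, then $x^1+x^2=(y^1+y^2)^{\beta_1+\beta_2}$, which gives the remaining compatibilities.

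I do not expect any serious obstacle: the whole argument is systematic bookkeeping of kernels, with Lemma \ref{lema} doing the work of transferring kernel equalities into equalities of units. The only mildly subtle point is that additive inverses are not directly visible in the bare $\boxplus$-notation and must be produced through the scalar action $(-1)\cdot x$.
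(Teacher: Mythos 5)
Your proposal is correct and follows essentially the same route as the paper: all axioms are verified by comparing the kernels $\mathcal L^{\cdot,\xi}$ via (\ref{sabiranje}), (\ref{mnozenje}), (\ref{mnozenje-levo}) and (\ref{stepenovanje beta}) and then invoking Lemma \ref{lema}, with your normalized kernel $T_x$ merely packaging that bookkeeping more systematically. Your additive inverse $(-1)\cdot x=(-x)\boxplus 2\omega$ is literally the paper's $2\omega\boxplus(-x)$, and your part (b) computations ($x\cdot a=(y\cdot a)^{\beta a}$, $b\cdot x=(b\cdot y)^{b\beta}$, $x^1+x^2=(y^1+y^2)^{\beta_1+\beta_2}$) coincide with the paper's.
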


\begin{proof}
a) The associativity follows from the associativity of $\boxplus$. In more details, both $(x+y)+z$ and
$x+(y+z)$ are equal to $x\boxplus y\boxplus z\boxplus(-2\omega)$.

The neutral element is $\omega$ and the inverse is $2\omega\boxplus(-x)$ which can be easily checked.

Commutativity is obvious.

The other axioms of left-right $\mathcal B-\mathcal B$ module $(x\cdot a)\cdot b=x\cdot(ab)$, $a\cdot(b\cdot
x)=(ab)\cdot x$, $a\cdot(x+y)=a\cdot x+a\cdot y$, $(x+y)\cdot a=x\cdot a+y\cdot a$ and $1\cdot x=x\cdot 1=x$
can be easily checked comparing the kernels.

b) Reflexivity follows choosing $\beta=0$. If $x=y^\beta$ then $\mathcal
L^{x,\xi}=\mathcal{L}^{y,{\xi}}+{\beta}^{*}\mathrm{id}_{\mathcal{B}}$, and hence
$$
\mathcal L^{y,\xi}=\mathcal{L}^{x,{\xi}}-{\beta}^{*}\mathrm{id}_{\mathcal{B}}=\mathcal L^{x^{-\beta},\xi},
$$
from which and from Lemma \ref{lema} the symmetry follows.

Transitivity. If $x=y^\beta$ and $y=z^\alpha$ then
$$\mathcal{L}^{x,\xi}=\mathcal{L}^{y,\xi}+{\beta}^*
\mathrm{id}_{\mathcal{B}}=\mathcal{L}^{z,\xi}+(\alpha+\beta)^*\mathrm{id}_{\mathcal{B}}=\mathcal{L}^{z^{\alpha+\beta},\xi}$$
for any $\xi\in\mathcal{U}$. From this and from Lemma \ref{lema} we conclude $x=z^{\alpha+\beta}$.

Let us now prove that the result of addition and multiplication by $b\in\mathcal B$ does not depend on the
choice of $\beta$. Indeed, if $x, y \in \mathcal{U}$ and $x_1=x^\beta$, $y_1=y^\alpha$, for some $\alpha$,
$\beta \in \mathcal{B}$. Then, by (\ref{sabiranje}) and (\ref{stepenovanje beta})
$$\begin{aligned}\mathcal{L}^{x_1+y_1,\xi}=&\L^{x_1,\xi}+\L^{y_1,\xi}-\L^{\omega,\xi}=\L^{x,\xi}+\beta^*\id_{\mathcal
B}+\L^{y_1,\xi}+\alpha^*\id_{\mathcal B}-\L^{\omega,\xi}=\\
=&\L^{x+y,\xi}+(\alpha+\beta)^*\id_{\mathcal B}=\L^{(x+y)^{\alpha+\beta},\xi}
\end{aligned}$$
for any $\xi \in \mathcal{U}$. It follows that, again using Lemma \ref{lema}
$$x_1+y_1=(x+y)^{\alpha+\beta}.$$

Further, let $x_1=x^\beta$ and let $a \in \mathcal{B}$. Then by (\ref{mnozenje})
$$\begin{aligned}
\mathcal L^{x_1\cdot
a,\xi}=&a^{*}\mathcal{L}^{x_1,\xi}+(1-a)^{*}\mathcal{L}^{\omega,\xi}=a^*(\L^{x,\xi}+\beta^*\id_{\mathcal
B})+(1-a)^*\L^{\omega,\xi}=\\
=&\L^{x\cdot a,\xi}+a^*\beta^*\id_{\mathcal B}=\mathcal{L}^{(x \cdot a)^{\beta a},\xi},
\end{aligned}$$
for any $\xi\in\mathcal{U}$. It follows, once again using Lemma \ref{lema}, that
$$x_1 \cdot a=(x \cdot a)^{\beta a}.$$

A similar argument shows that $a\cdot x_1=(a\cdot x)^{a\beta}$.
\end{proof}

We can immediately form the quotient module $\mathcal U/\approx$. However, it might not be the accurate
choice, taking into account possible choices of inner product. Thus, we are looking for the most suitable
choice of a $\mathcal B$ valued inner product on $\mathcal U$. For a while, we shall consider a family of
candidates. Namely, for every positive element $b \in \mathcal{B}$ there is a map $\langle\ ,\
\rangle_b:\mathcal{U} \times \mathcal{U} \longrightarrow \mathcal{B}$ given by
\begin{equation}\label{skpdef}\langle
x,y\rangle_{b}=(\mathcal{L}^{x,y}-\mathcal{L}^{x,\omega}-\mathcal{L}^{\omega,y}+\mathcal{L}^{\omega,\omega})(b),
\end{equation}
where $\omega$ is the same as in (\ref{operacije}).

Any of these mappings is $\mathcal B$-valued semi-inner product (in the sense that it can be degenerate,
i.e.~$\skp xx_b=0$ need not imply $x=0$). Nevertheless, it satisfies all other customary properties.

\begin{prp}\label{skp} The pairing (\ref{skpdef}) satisfies the following
properties:

\begin{enumerate}
\renewcommand{\theenumi}{\alph{enumi}}

\item{}\label{skp-i} For all $x,y,z \in \mathcal{U}$, and $\alpha,\beta \in \mathbb{C}$ $\langle x,\alpha y+\beta
z \rangle_{b}=\alpha \langle x,y \rangle_{b}+\beta \langle x,z \rangle_{b}$;

\item{}\label{skp-ii} For all $x,y \in \mathcal{U}$, $a \in \mathcal{B}$ $\langle x,y\cdot a \rangle_{b}=\langle x,y
\rangle_{b}a$;

\item{}\label{skp-iii} For all $x,y \in \mathcal{U}$\ $\langle x,y \rangle_{b}=\langle y,x\rangle_{b}^{*}$;

\item{}\label{skp-iv} For all $x \in \mathcal{U}$\ $\langle x,x\rangle_{b}\geq 0$;

\item{}\label{skp-v} If $x\approx x'$ and $y\approx y'$ then $\skp xy_b=\skp{x'}{y'}_b$;

\item{}\label{skp-vi} For all $x,y\in\mathcal U$, $0\le a\in\mathcal B$ $\skp x{a\cdot y}_1=\skp xy_a$;

\item{}\label{skp-vii} If $0\le b(\in\mathcal B)\le1$ then for all $x\in\mathcal U$ we have $\skp xx_b\le\skp xx_1$.

\item{}\label{skp-viii} There holds $\displaystyle\skp{x-y}{x-y}_1=\lim_{t\to0+}\frac{\skp{x_t-y_t}{x_t-y_t}}t$.

\end{enumerate}
\end{prp}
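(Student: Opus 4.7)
The plan is to verify the eight clauses in the order stated, since each reduces to manipulation of the kernel calculus established above plus one call to conditional complete positivity. For (\ref{skp-i})--(\ref{skp-iii}), the work is almost bookkeeping. Linearity in the second slot, (\ref{skp-i}), follows from the fact that $\L^{x,y}$ is $\mathbb C$-linear in $y$ (via $\K^{x,y}_t$), so the same holds of $\skp{x}{\cdot}_b$; alternatively, substitute the addition formula (\ref{sabiranje}) and see that the $\omega$-terms cancel in (\ref{skpdef}). For (\ref{skp-ii}), use (\ref{obrat}) to convert (\ref{mnozenje}) into $\L^{x,y\cdot a}(b)=\L^{x,y}(b)a+\L^{x,\omega}(b)(1-a)$ and similarly for $\L^{\omega,y\cdot a}$; the $(1-a)$-pieces combine with the $-\L^{x,\omega}$ and $+\L^{\omega,\omega}$ contributions in (\ref{skpdef}) so that the result is $\skp xy_b\cdot a$. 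Property (\ref{skp-iii}) is immediate from (\ref{obrat}).

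For positivity (\ref{skp-iv}), the key input is (\ref{CCPD}) applied to the two-tuple $x_1=x$, $x_2=\omega$ with $a_1=c$, $a_2=-c$ and $b_1=b_2=1$, so that $\sum a_jb_j=0$. The conclusion reads
$$(\L^{x,x}-\L^{x,\omega}-\L^{\omega,x}+\L^{\omega,\omega})(c^*c)\ge0,$$
and every positive $b\in\mathcal B$ has the form $c^*c$ (take $c=b^{1/2}$), so $\skp xx_b\ge0$. For (\ref{skp-v}), write $x=x'^{\beta}$ and $y=y'^{\gamma}$; combining (\ref{stepenovanje beta}) with (\ref{obrat}) yields
$$\L^{x'^{\beta},y'^{\gamma}}(b)=\beta^*b+\L^{x',y'}(b)+b\gamma,$$
and the extra summands $\beta^*b$ and $b\gamma$ cancel telescopically against the corresponding terms in $\L^{x,\omega}$ and $\L^{\omega,y}$ inside (\ref{skpdef}), leaving $\skp{x'}{y'}_b$.

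For (\ref{skp-vi}), compute $\L^{\xi,a\cdot y}$ from (\ref{mnozenje-levo}) via (\ref{obrat}); for $a\ge0$ (so $a^*=a$ and $(1-a)^*=1-a$) one finds $\L^{\xi,a\cdot y}(1)=\L^{\xi,y}(a)+\L^{\xi,\omega}(1-a)$, and substituting into (\ref{skpdef}) at $b=1$ the $\L^{\xi,\omega}(1)$ pieces cancel, leaving $\skp xy_a$. Property (\ref{skp-vii}) then follows for free: since each $\L^{x,y}$ is $\mathbb C$-linear, the map $b\mapsto\skp xx_b$ is additive in $b$, so $\skp xx_1-\skp xx_b=\skp xx_{1-b}\ge0$ by (\ref{skp-iv}). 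Finally, for (\ref{skp-viii}), iterating the additivity formulas and then using (\ref{skp-iii}) in the first slot gives $\L^{x-y,\xi}=\L^{x,\xi}-\L^{y,\xi}+\L^{\omega,\xi}$ and, after expansion, $\skp{x-y}{x-y}_1=(\L^{x,x}-\L^{x,y}-\L^{y,x}+\L^{y,y})(1)$, all the $\omega$-terms having cancelled in the alternating sum. On the other hand, $\skp{x_t-y_t}{x_t-y_t}=\K^{x,x}_t(1)-\K^{x,y}_t(1)-\K^{y,x}_t(1)+\K^{y,y}_t(1)$ equals $0$ at $t=0$, so by differentiating (\ref{K preko L}) at $t=0$ the difference quotient tends to the same quantity.

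The only real care required is the sign bookkeeping in (\ref{skp-v}) and (\ref{skp-viii}), where many $\omega$-terms must be tracked through the alternating sum (\ref{skpdef}). The conceptual content is concentrated in (\ref{skp-iv}), but there the positivity is a direct harvest of the conditional complete positivity of $\mathcal L$ inherited from the CPD-semigroup.
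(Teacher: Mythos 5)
Your proof is correct and follows essentially the same route as the paper's (which is only a terse sketch): the same alternating-sum cancellations for (a)--(c), (e), (f), (h), the same two-point instance of conditional complete positivity for (d) and (g) (you put the sign in the $a_j$'s where the paper puts it in the $b_j$'s, which yields the identical inequality), and the same use of (\ref{mnozenje-levo}) for (f). No gaps.
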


\begin{proof} (\ref{skp-i})-(\ref{skp-iii}) is easy to check. (\ref{skp-iv}) follows, since $\mathcal L$ is conditionally CPD,
precisely we can put $n=2$, $x^1=x$, $x^2=\omega$, $a_1=a_2=\sqrt b$, $b_1=1$, $b_2=-1$ in (\ref{CCPD}).
(\ref{skp-v}) follows from the cancellation of terms $\beta^*b$ and $b\gamma$ in expanded form of
$\skp{x'}{y'}$, where $x'=x^\beta$ and $y'=y^\gamma$.

To conclude (\ref{skp-vi}), expand $\skp x{a\cdot y}$ and use (\ref{mnozenje-levo}).

(\ref{skp-vii}) - Since $\mathcal{L}$ is conditionally completely positive definite, we get
$\mathcal{L}^{x,x}(1-b)-\mathcal{L}^{x,\omega}(1-b)-\mathcal{L}^{\omega,x}(1-b)+
\mathcal{L}^{\omega,\omega}(1-b)\geq0$. It follows that $\langle x,x \rangle_b \leq \langle x,x \rangle_{1}.$

(\ref{skp-viii}) follows from (\ref{skpdef}) and definition of $\L$ after few cancellations.
\end{proof}

Our choice for the inner product will be $\skp\cdot\cdot_1$, which we shall abbreviate to $\skp\cdot\cdot$ in
further, i.e.\ if we omit the index $b$ we shall assume $b=1$. From the properties (\ref{skp-ii}) -
(\ref{skp-iv}) of the previous Proposition we can derive the Cauchy Schwartz inequality (see
\cite[Proposition 1.1]{Lance} or \cite[Proposition 1.2.4]{MT})
\begin{equation}\label{CauchySchwartz}
\skp xy\skp xy^*\le\skp xx||\skp yy||.
\end{equation}
It follows that the set $N=\{x\in\mathcal U\:|\:\skp xx=0\}$ is equal to $\{x\in\mathcal U\:|\:\forall
y\in\mathcal U,\;\skp xy=0\}$ and that it contains $\{\omega^\beta\:|\:\beta\in\mathcal B\}$ (property
(\ref{skp-v})). From this and from (\ref{skp-i}) - (\ref{skp-iii}) and (\ref{skp-vi}) we conclude that $N$ is
a submodule of $\mathcal U$, and $\mathcal U/N$ is a pre-Hilbert left right $\mathcal B$ - $\mathcal B$
module.

\begin{dfn}\label{definicija indeksa}
Let $E$ be a product system, and let $\omega$ be a continuous unit on $E$. The index of a pair $(E,\omega)$
is the completion of pre-Hilbert left-right module $\mathcal U/\sim$, where $\mathcal U=\mathcal U_\omega$ is
the maximal continuous set of units containing $\omega$, and $\sim$ is the equivalence relation defined by
$x\sim y$ if and only if $x-y\in N$. Naturally, the index will be denoted by $\ind(E,\omega)$.
\end{dfn}

\begin{rmr}If $E$ can be embedded into a spatial product system, as we shall see in the next section, the
completion is unnecessary.
\end{rmr}

\begin{rmr}
If $\{\omega,\omega'\}$ is a continuous set, then $\ind(E,\omega)\cong\ind(E,\omega')$. Indeed, then
$\mathcal U_\omega=\mathcal U_{\omega'}$ and the isometric isomorphism is given by translation $x\mapsto
x\boxplus-\omega\boxplus\omega'$
\end{rmr}

The following definition of {\em continuous} product system \cite[Section 7]{SK03} will allow us to speak of
the index of $E$ without highlighting the unit $\omega$.

\begin{dfn}Continuous product system is a product system $(E_t)_{t\ge0}$, together with a family of isometric
embeddings $i_t: E_t\to E$ into a unital Hilbert bimodule, which satisfies

\begin{enumerate}
\item{} For every $y_s\in E_s$ there exists a continuous section $(x_t)\in CS_i(E)$ such that
$y_s=x_s$;

\item{} For every pair $x,y\in CS_i(E)$ of continuous sections the function $(s,t)\mapsto i_{s+t}(x_s\otimes
y_t)$ is continuous;
\end{enumerate}
where the set of continuous sections (with respect to $i$) is
$$CS_i(E)=\{x=(x_t)_{t\ge0}\:|\:x_t\in E_t,\:t\mapsto i_tx_t\mbox{ is continuous}\}.$$
\end{dfn}

By \cite[Theorems 7.5 and 7.7]{SK03} (see also \cite[Theorems 2.4 and 2.5]{PAMS10}) there is at most one
continuous structure on $E$ that makes a given continuous unit $\omega$ a continuous section. Further, given
a continuous unit $\omega\in CS_i(E)$, the set $\mathcal U_\omega$ coincides with the set of all continuous
units that belongs to $CS_i(E)$. Indeed, if continuous unit $x$ belongs to $CS_i(E)$ then $\omega\sim x$ by
\cite[Theorem 7.7]{SK03}. Conversely, if $\omega\sim x$ and $x$ is a continuous unit then
$$||x_{t+\e}-x_t||\le||x_t\otimes
x_\e-x_t\otimes\omega_\e||+||x_t\otimes\omega_\e-x_t||\le||x_t||(||x_\e-\omega_\e||+||\omega_\e-1||)\to0,$$
as $\e\to0+$, because the first summand tends to zero by $\omega\sim x$, whereas the second summand tends to
zero, by $\omega\in CS_i(E)$. Left continuity follows from right, since
$||x_{t-\e}-x_t||\le||x_{t-\e}||\,||1-x_\e||$

Thus, for continuous product systems, we shall not underlain the unit $\omega$, i.e.\ we shall write
$\ind(E)$.

The class of all continuous product systems is a category, if morphism are defined as follows.

\begin{dfn}\label{Contstr}
The mapping $\theta:E\to F$ between two continuous product systems (with embeddings $i$ and $j$
respectively) is morphism if:

\begin{enumerate}
\item{}\label{Contstr-i} $\theta|_{E_t}$ is a bounded adjointable $\mathcal B-\mathcal B$ linear mappings
$\theta_t:E_t \rightarrow F_t$ fulfilling $\theta_{t+s}=\theta_t\otimes\theta_s$ and
$\theta_0={\id}_{\mathcal B}$;

\item{}\label{Contstr-ii} Both $\theta$ and $\theta^*$ preserve continuous structure, i.e.\ if
$(x_t)_{t\ge0}\in CS_i(E)$ is a continuous section then $(\theta(x_t))_{t\ge0}\in CS_j(F)$, and if
$(y_t)_{t\ge0}\in CS_j(F)$ then $(\theta^*(y_t))_{t\ge0}\in CS_i(E)$;

\item{}\label{Contstr-iii} $\displaystyle\limsup_{t\to0+}||\theta_t||<+\infty$.
\end{enumerate}
\end{dfn}

\begin{rmr} In \cite[Section 2]{KPR06} morphisms are defined as mappings that satisfies only condition
(\ref{Contstr-i})
(in previous Definition). This definition is, however, pure algebraic, and we can not say anything about
continuous structure, without additional assumptions.
\end{rmr}

\begin{prp} The index is a covariant functor from the category of continuous product systems over $\mathcal B$ to the
category of all left-right $\mathcal B-\mathcal B$ modules.
\end{prp}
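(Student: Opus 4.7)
The plan is to define the functor on morphisms by $\ind(\theta)[x]:=[\theta\circ x]$, where $(\theta\circ x)_t:=\theta_t(x_t)$, and to check, in this order: (i) $\theta\circ x$ is a uniformly continuous unit on $F$ lying in $\mathcal U_{\theta\circ\omega}$; (ii) the assignment $x\mapsto\theta\circ x$ is a homomorphism of the module structure from (\ref{operacije}), compatible with $\approx$; (iii) it descends to the quotient by $N$ as a bounded map, hence extends to the completion; (iv) the assignment is functorial in $\theta$.

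For (i), $\theta\circ x$ is a unit by Definition~\ref{Contstr}(\ref{Contstr-i}), since $\theta_0(x_0)=\id_{\mathcal B}(1)=1$ and $\theta_{t+s}(x_{t+s})=(\theta_t\otimes\theta_s)(x_t\otimes x_s)=\theta_t(x_t)\otimes\theta_s(x_s)$. By (\ref{Contstr-ii}), both $\theta\circ x$ and $\theta\circ\omega$ are continuous sections of $F$. Continuity of $\{x,\omega\}$ in $E$ yields $\|x_t-\omega_t\|\to0$ (as in the transitivity argument preceding Definition~\ref{definicija indeksa}), and then (\ref{Contstr-iii}) gives $\|\theta_t(x_t)-\theta_t(\omega_t)\|\le\|\theta_t\|\,\|x_t-\omega_t\|\to0$. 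Substituting $\theta_t(x_t)=\theta_t(\omega_t)+(\theta_t(x_t)-\theta_t(\omega_t))$ into $\K^{\theta\circ\xi,\theta\circ\eta}_t(b)$ for $\xi,\eta\in\{x,\omega\}$ shows that all four such kernels coincide with $\K^{\theta\circ\omega,\theta\circ\omega}_t$ up to terms vanishing in $L(\mathcal B)$-norm; combined with uniform continuity of $\K^{\theta\circ\omega,\theta\circ\omega}_t$, which follows from $\theta\circ\omega\in CS_j(F)$ via the Skeide characterisation of $\mathcal U_{\omega'}$ quoted in the discussion after Definition~\ref{Contstr}, this proves $\{\theta\circ x,\theta\circ\omega\}$ is continuous, so $\theta\circ x\in\mathcal U_{\theta\circ\omega}$.

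For (ii), $\mathcal B$-bilinearity of $\theta_t$ together with $\theta_{t+s}=\theta_t\otimes\theta_s$ give, via Proposition~\ref{LS}(b),
\begin{equation*}
\theta\circ(\varkappa_1 x^1\boxplus\varkappa_2 x^2)=\lim_{n\to\infty}\theta_t\bigl((\varkappa_1 x^1_{t/n}+\varkappa_2 x^2_{t/n})^{\otimes n}\bigr)=\varkappa_1(\theta\circ x^1)\boxplus\varkappa_2(\theta\circ x^2),
\end{equation*}
and similarly $\theta\circ x^\beta=(\theta\circ x)^\beta$ from $(x^\beta)_t=x_t e^{\beta t}$. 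Hence $\theta\circ$ respects $+$, both $\mathcal B$-actions in (\ref{operacije}), and the relation $\approx$. For (iii), property (\ref{skp-viii}) of Proposition~\ref{skp} and the standard Hilbert-module inequality $\skp{Tu}{Tu}\le\|T\|^2\skp{u}{u}$ for the adjointable operator $T=\theta_t$ yield
\begin{equation*}
\skp{\theta\circ x-\theta\circ y}{\theta\circ x-\theta\circ y}^F=\lim_{t\to0+}\frac{\skp{\theta_t(x_t-y_t)}{\theta_t(x_t-y_t)}}{t}\le C\,\skp{x-y}{x-y}^E,
\end{equation*}
with $C:=\limsup_{t\to0+}\|\theta_t\|^2<+\infty$ by (\ref{Contstr-iii}) and the superscripts indicating the inner products of $\ind(F)$ and $\ind(E)$ respectively. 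Hence $\theta\circ$ sends $N_E$ into $N_F$ and descends to a bounded $\mathcal B$-$\mathcal B$ linear map between the quotient pre-Hilbert modules, which extends uniquely to the completions. For (iv), $\ind(\id_E)$ is the identity by construction and $(\theta\circ\psi)\circ x=\theta\circ(\psi\circ x)$ pointwise gives $\ind(\theta\psi)=\ind(\theta)\ind(\psi)$.

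The main obstacle is step (i), and more precisely the verification that $\theta\circ\omega$ is itself a uniformly continuous unit on $F$ so that $\mathcal U_{\theta\circ\omega}$ is a bona fide maximal continuous set; this is the only place where the full force of Definition~\ref{Contstr}(\ref{Contstr-ii})--(\ref{Contstr-iii}) and the Skeide-type characterisation of continuous structures is needed. Everything else reduces to routine consequences of $\mathcal B$-bilinearity, Proposition~\ref{LS}(b), and the Hilbert-module inequality used in step (iii).
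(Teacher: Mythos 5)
Your proof is correct in substance and runs parallel to the paper's: the same definition $\ind(\theta)[x]=[\theta\circ x]$, the same use of Proposition~\ref{skp}~(\ref{skp-viii}) together with $\limsup_{t\to0+}\|\theta_t\|<+\infty$ to obtain boundedness (and, as a byproduct, well-definedness on the quotient by $N$). Where you genuinely diverge is in verifying that $x\mapsto\theta\circ x$ is a module homomorphism: you use the norm-limit characterisation from Proposition~\ref{LS}(b) and push $\theta_t$ through the limit $(y_{t/n})^{\otimes n}$, whereas the paper compares the kernels $\L^{\theta(x+y),\xi}$ and $\L^{\theta(x)+\theta(y),\xi}$ and invokes Lemma~\ref{lema}. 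Both work, but the paper's infinitesimal route avoids two small frictions in yours: Proposition~\ref{LS} is stated for a continuous set that \emph{generates} the product system, so your limit argument tacitly passes to the subsystem generated by $\mathcal U_\omega$; and your shortcut $(x^\beta)_t=x_te^{\beta t}$ is not literally valid in general (the paper asserts it only for central $x$ in Lemma~\ref{normirana jedinica}) --- in general $x^\beta$ is only the limit of $(x_{t/n}e^{\beta t/n})^{\otimes n}$, so you should run the same limit argument there as well. Your treatment of continuity of $\theta\circ x$ is actually more explicit than the paper's, which simply cites that the continuous units lying in $CS_j(F)$ constitute $\mathcal U_{\theta(\omega)}$. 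The one item you omit that the paper includes is the adjointability of $\ind(\theta)$: the paper computes $(\ind(\theta))^*(y)=\theta^*y-\theta^*\omega'$ and concludes $\ind(\theta)\in B^{a,bil}(\ind(E);\ind(F))$; since a bounded module map between Hilbert $C^*$-modules need not be adjointable, you should add this computation if, as the paper's conclusion indicates, the morphisms of the target category are meant to be adjointable bilinear maps.
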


\begin{proof}
Let $\theta:E\to F$ be a morphism. For a reference unit in $\mathcal U_F$ choose $\omega'=\theta(\omega)$.
For an arbitrary unit $x=(x_t)$ on $E$, $\theta(x)=(\theta_t(x_t))$ is a unit on $F$ since
$\theta_{t+s}(x_{t+s})=\theta_{t+s}(x_t\otimes x_s)=\theta_t(x_t)\otimes\theta_s(x_s)$ and $\theta_0(x_0)=1$.
Further, if $x$ is continuous, then $x\in CS_i(E)$, implying $\theta(x)\in CS_j(F)$, that is $\theta(x)$ is
continuous.

Using Lemma \ref{lema} and noting that $\L^{\theta(x+y),\xi}=\L^{\theta(x)+\theta(y),\xi}$, it follows that
$\theta(x+y)=\theta(x)+\theta(y)$ for $x,y\in {\mathcal U_{E}}$. Similarly, $\theta(x\cdot a)=\theta(x)\cdot
a$ and $\theta(a\cdot x)=a\cdot \theta(x)$, $a\in{\mathcal B}$.

Hence, the mapping $\mathcal U_E\ni x\mapsto\theta(x)\in{\mathcal U}_F$ is an algebraic homomorphism.

Let $\theta^*:F\to E$ denote the morphism which fibers are $\theta_t^*:F_t\to E_t$, and let $x=(x_t)$ be a
unit in $E$. Then $(\theta_t^*\theta_tx_t)$ is also a unit. Let $y\sim y_1$ in $E$, and denote
$\psi_t=\theta_t^*\theta_t$. Then, for all $x\in\mathcal U_E$, using Proposition \ref{skp} (\ref{skp-viii}),
we obtain $\skp{\theta x}{\theta y}=\skp{\psi x-\psi\omega}y$ and hence $\skp{\theta x}{\theta y-\theta
y_1}=\skp{\psi x-\psi\omega}{y-y_1}=0$, implying $\theta y\sim\theta y_1$. Thus, we obtain a well defined
homomorphism $\mathcal U_E/\sim\ni[x]\mapsto\ind(\theta)([x])=[\theta x]\in\mathcal U_F/\sim$.

Let us prove that $\ind(\theta)$ is an adjointable mapping. For any $y\in\mathcal U_F$, $\theta^*y\in\mathcal
U_E$. Then we have
$$\begin{aligned}\skp{\ind(\theta)x}y=&\L^{\theta x,y}(1)-\L^{\theta
x,\omega'}(1)-\L^{\omega',y}(1)+\L^{\omega',\omega'}(1)=\\
=&\L^{x,\theta^*y}(1)-\L^{x,\theta^*\omega'}-\L^{\omega,\theta^*y}(1)+\L^{\omega,\theta^*\omega'}(1)=\skp
x{\theta^*y-\theta^*\omega'}.
\end{aligned}$$
This shows that the adjoint of $\ind(\theta)$ is the mapping $(\ind(\theta))^*(y)=\theta^*y-\theta^*\omega$
(the composition of $\ind(\theta^*)$ and translation $x\mapsto x-\theta^*\omega'$.

Finally, let us prove that $\ind(\theta)$ is bounded, and, therefore, that it can be extended to $\ind(E)$.
Using Proposition \ref{skp} (\ref{skp-viii}) and \cite[Proposition1.2]{Lance} (or \cite[Corollary 2.1.6]{MT})
we obtain

\begin{multline*}
\skp{\theta x}{\theta
x}=\lim_{t\to0+}\frac{\skp{\theta_tx_t-\theta_t\omega_t}{\theta_tx_t-\theta_t\omega_t}}t\le\\
\le\limsup_{t\to0+}||\theta_t||^2\frac{\skp{x_t-\omega_t}{x_t-\omega_t}}t\le(\limsup_{t\to0+}||\theta_t||^2)\skp
xx.
\end{multline*}

Hence $\ind(E)\in B^{a,bil}(\ind(E);\ind(F))$ is a morphism in the category of all left-right $\mathcal
B-\mathcal B$ modules over $\mathcal B$.

It can be easily seen  that ${\ind}({\id}_{E})={\id}_{{\ind}(E)}$ and ${\ind}(\psi \theta)={\ind}(\psi)
{\ind}(\theta)$ for all morphisms $\theta$ between product systems $E$ and $F$ and $\psi$ between product
systems $F$ and $G$.
\end{proof}

\begin{rmr}
The induced mapping $\ind(\theta)$ preserves the relation $\approx$. Indeed, if $x'=x^{\beta}$, $\beta\in
{\mathcal B}$, then $\L^{\theta(x'),\xi}=\L^{\theta(x)^{\beta},\xi}$ for $\xi \in \mathcal U_{F}$ and Lemma
\ref{lema} implies that $\theta(x')=\theta(x)^{\beta}$.
\end{rmr}

\begin{rmr}If the condition (\ref{Contstr-iii}) is suppressed, we only can obtain that $\ind(\theta)$ is
densely defined adjointable (possibly unbounded) operator from $\ind(E)$ to $\ind(F)$.
\end{rmr}

\begin{cor}If $E$ and $F$ are algebraically isomorphic product systems then their indices coincide.
\end{cor}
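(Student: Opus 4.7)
The plan is to deduce the corollary from the ingredients already assembled in the proof of the preceding proposition, supplemented by the observation that an algebraic isomorphism acts unitarily on each fibre. By an algebraic isomorphism I take a family $\theta=(\theta_t)$ of unitary $\mathcal B$-$\mathcal B$ bimodule isomorphisms $\theta_t:E_t\to F_t$ satisfying $\theta_0=\id_{\mathcal B}$ and $\theta_{t+s}=\theta_t\otimes\theta_s$; this is the standard meaning in the product system literature, and bijectivity together with compatibility with the inner product $\langle u\otimes v,u_1\otimes v_1\rangle=\langle v,\langle u,u_1\rangle v_1\rangle$ on the tensor product forces unitarity in any case.

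The first step is to record the key identity
\[
\K^{\theta x,\theta y}_t(b)=\langle\theta_tx_t,b\theta_ty_t\rangle=\langle\theta_tx_t,\theta_t(by_t)\rangle=\langle x_t,by_t\rangle=\K^{x,y}_t(b),
\]
where the second equality uses bimodule linearity and the third the unitarity of $\theta_t$. Differentiating at $t=0$ gives $\mathcal L^{\theta x,\theta y}=\mathcal L^{x,y}$, so $\theta$ sends uniformly continuous units to uniformly continuous units and continuous sets of units to continuous sets of units. Fixing a continuous reference unit $\omega$ on $E$ and setting $\omega':=\theta\omega$, the correspondence $x\mapsto\theta x$ restricts to a bijection $\mathcal U_\omega\to\mathcal U_{\omega'}$, with inverse induced by $\theta^{-1}$. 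Because every ingredient in the construction of the index is expressed through the kernels $\mathcal L^{\cdot,\cdot}$, the same arguments used in the functoriality proof (combined with Lemma \ref{lema}) show that this bijection respects the operations $+$, $b\cdot$ and $\cdot b$ from (\ref{operacije}), the equivalence relations $\approx$ and $\sim$, and the $\mathcal B$-valued inner product $\langle\cdot,\cdot\rangle_b$ from (\ref{skpdef}). The induced map on $\mathcal U_\omega/\sim$ is thus an isometric left-right $\mathcal B$-bimodule isomorphism onto $\mathcal U_{\omega'}/\sim$, and extends uniquely to a unitary isomorphism $\ind(E,\omega)\cong\ind(F,\omega')$.

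The main subtle point I would flag at the outset is that an algebraic isomorphism is not assumed to be a morphism in the sense of Definition \ref{Contstr}: conditions (\ref{Contstr-ii}) and (\ref{Contstr-iii}) concern the continuous structures on $E$ and $F$, which are a priori unrelated. This causes no difficulty, because the kernel-preservation identity above lets one verify everything without ever referring to those continuous structures, so the functoriality proposition is used only as a guide rather than invoked as a black box.
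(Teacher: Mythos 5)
Your proposal is correct and follows essentially the same route as the paper: both rest on the identity $\langle\theta_t x_t,b\theta_t y_t\rangle=\langle x_t,by_t\rangle$ for unitary fibres, which preserves the kernels $\mathcal L^{\cdot,\cdot}$ and hence the continuity of units, the module operations and the index inner product, so that the induced map is a unitary isomorphism $\ind(E,\omega)\cong\ind(F,\theta(\omega))$. Your explicit caveat that condition (\ref{Contstr-ii}) of Definition \ref{Contstr} is not automatically satisfied by an algebraic isomorphism, and that one should therefore verify everything directly through the kernels rather than cite the functoriality proposition as a black box, is a point the paper's own proof passes over silently (it only checks condition (\ref{Contstr-iii})), but it does not alter the substance of the argument.
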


\begin{proof}$E$ and $F$ are algebraically isomorphic if there is a mapping $\theta:E\to F$, whose fibers are
unitary operators. The norm of unitary operators is equal to $1$, and the condition (\ref{Contstr-iii}) in
Definition \ref{Contstr} is fulfilled. Further, $\skp{\theta_tx_t}{b\theta_ty_t}=\skp{x_t}{by_t}$, from which
we conclude that $\theta$ converts continuous units into continuous, as well as $\skp{\theta x}{\theta
x}=\skp xx$. Therefore, in this case $\ind(\theta)$ is unitary operator, implying $\ind(E)\cong\ind(F)$, or
more precisely $\ind(E,\omega)=\ind(F,\theta(\omega))$.
\end{proof}

\section{Subadditivity of the index}\label{sectensor}

Given two product systems, $E$ over a unital $C^*$-algebra $\mathcal A$ and $F$ over $\mathcal B$, we can
consider its (outer) tensor product $E\otimes F$ as a product system over $\mathcal A\otimes \mathcal B$,
taking pointwise outer tensor product $E_t\otimes F_t$ as a Hilbert module over $\mathcal A\otimes \mathcal
B$. (Here $\mathcal A\otimes\mathcal B$ denotes the spatial tensor product of $C^*$-algebras.) This is the
direct generalization of tensor product within the category of Arveson product system. On the other hand, it
appears as a product system generated by the action of $E_0$ semigroup $\alpha\otimes\beta$ on $\mathcal
M\otimes\mathcal N$, where $\alpha$ and $\beta$ are $E_0$ semigroups on type $II_1$ factors $\mathcal M$ and
$\mathcal N$ (see \cite{JOT04}).

It is easy to see that units $x_t$ on $E$, and $y_t$ on $F$ gives rise to the unit $x_t\otimes y_t$ on
$E\otimes F$. The corresponding semigroup is (evaluated on elementary tensors)
$$\skp{x_t\otimes x_t'}{(a\otimes b)(y_t\otimes y_t')}=\skp{x_t}{ay_t}\otimes\skp{x'_t}{by_t'},$$
and its continuity is obvious. Thus, we have the mapping $\mathcal U_E\times\mathcal U_F\to\mathcal
U_{E\otimes F}$. If $\omega$ and $\omega'$ are reference units in $\ind E$ and $\ind F$, then it is natural
to choose $\omega\otimes\omega'$ to be the reference unit in $E\otimes F$.

First, we list some basic properties of $x\otimes y$.

\begin{prp}\label{racunica sa otimes}
Let $1$ and $1'$ denote the identity elements in $\mathcal A$ and $\mathcal B$. Then for all $a\in\mathcal
A$, $b\in\mathcal B$, $x,y\in\mathcal U_E$ and $x',y'\in\mathcal U_F$ there holds:

\begin{enumerate}
\renewcommand{\theenumi}{\alph{enumi}}

\item{}\label{rac-a} $\L^{x\otimes x',y\otimes y'}(a\otimes b)=a\otimes\L^{x',y'}(b)+\L^{x,y}(a)\otimes b$ - Leibnitz rule;

\item{}\label{rac-b} $\skp{x\otimes x'}{y\otimes y'}=1\otimes\skp{x'}{y'}+\skp xy\otimes 1'$, where the inner products are those
in $\mathcal U_{E\otimes F}$, $\mathcal U_E$ and $\mathcal U_F$, respectively;

\item{}\label{rac-c} $(x\otimes\omega')\cdot(\alpha\otimes 1')=(x\cdot\alpha)\otimes\omega'$, $(\omega\otimes
y)\cdot(1\otimes\beta)=\omega\otimes(y\cdot\beta)$, where $\cdot$ denotes the multiplying in modules
$\mathcal U_{E\otimes F}$, $\mathcal U_E$ and $\mathcal U_F$, respectively;

\item{}\label{rac-d} $x\otimes y=x\otimes\omega'+\omega\otimes y$, where addition is those in module $\mathcal U_{E\otimes F}$;

\item{}\label{rac-e} $(x\otimes\omega')\cdot(1\otimes\beta)=(1\otimes\beta)\cdot(x\otimes\omega')$ and $(\omega\otimes
y)\cdot(\alpha\otimes 1')=(\alpha\otimes 1')\cdot(\omega\otimes y)$;

\item{}\label{rac-f} $\skp{x\otimes\omega'}{\omega\otimes y}=0$.
\end{enumerate}
\end{prp}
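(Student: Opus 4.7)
The plan is to establish the Leibniz rule (a) by differentiating the semigroup identity at $t=0$, and to deduce each of (b)--(f) from it by a kernel comparison using Lemma \ref{lema}. For (a), the identity $\skp{x_t\otimes x'_t}{(a\otimes b)(y_t\otimes y'_t)}=\skp{x_t}{ay_t}\otimes\skp{x'_t}{by'_t}$ recorded just before the statement expresses $\K_t^{x\otimes x',y\otimes y'}(a\otimes b)$ as the pointwise product of two uniformly continuous semigroups in $L(\mathcal A)$ and $L(\mathcal B)$. Differentiating at $t=0$ by the ordinary product rule gives $\L^{x\otimes x',y\otimes y'}(a\otimes b)=\L^{x,y}(a)\otimes b+a\otimes\L^{x',y'}(b)$; by boundedness of $\L^{x\otimes x',y\otimes y'}$ on $\mathcal A\otimes\mathcal B$ and density of elementary tensors, the formula extends to arbitrary arguments.

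Part (b) is then routine: substitute $a=1$, $b=1'$ into (a) for each of the four kernels in the definition (\ref{skpdef}) of the inner product $\skp{x\otimes x'}{y\otimes y'}$ (with reference unit $\omega\otimes\omega'$); the $\omega$-pieces pair off under the $+,-,-,+$ sign pattern and leave $\skp xy\otimes 1'+1\otimes\skp{x'}{y'}$. For (c)--(e) I would invoke Lemma \ref{lema}: two units coincide once their $\L^{\cdot,\xi}$-kernels agree for every $\xi\in\mathcal U_{E\otimes F}$, and by density of elementary tensors it suffices to test $\xi=\eta\otimes\eta'$ on inputs $a\otimes b$. Substituting the Leibniz rule into formulas (\ref{sabiranje}), (\ref{mnozenje}), (\ref{mnozenje-levo}) then reduces both sides to the same linear combination of $\L^{x,\eta}$- and $\L^{\omega',\eta'}$-type terms. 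The nontrivial cancellations are: in (d) the $-\L^{\omega\otimes\omega',\xi}$ correction from (\ref{sabiranje}) removes the duplicate $a\otimes\L^{\omega',\eta'}(b)$ produced separately by $x\otimes\omega'$ and $\omega\otimes y$; in (e) the identity $\beta^*+(1'-\beta)^*=1'$ collapses both sides to the same expression, irrespective of whether $1\otimes\beta$ acts on the left or the right. Part (f) is the shortest: plugging (a) into the four kernels of (\ref{skpdef}) shows that all eight resulting summands cancel pairwise.

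The one step that needs attention is (e). Left and right actions of $1\otimes\beta$ enter the kernel formulas differently: the left action precomposes $\L^{x\otimes\omega',\xi}$ by $L_{(1\otimes\beta)^*}$, depositing $\beta^*$ \emph{inside} the argument before evaluation, while the right action postmultiplies its output by $(1\otimes\beta)^*$. Applied to a single summand these are genuinely different operations, and they become equal only after adding the complementary $(1'-\beta)^*$-weighted $\omega\otimes\omega'$-term; so the equality must be verified on the full two-term expression rather than summand by summand. Once this bookkeeping is tracked, (e) is no harder than (c) or (d).
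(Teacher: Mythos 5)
Your overall strategy coincides with the paper's: prove the Leibnitz rule (\ref{rac-a}) by differentiating the factorized semigroup at $t=0$, read off (\ref{rac-b}) and (\ref{rac-f}) from the sign pattern in (\ref{skpdef}), and settle (\ref{rac-c})--(\ref{rac-e}) by substituting (\ref{rac-a}) into (\ref{sabiranje}), (\ref{mnozenje}), (\ref{mnozenje-levo}) and comparing kernels via Lemma \ref{lema}. Your remarks on the cancellations in (\ref{rac-d}) and on the asymmetry between $L_{(1\otimes\beta)^*}$ and right postmultiplication in (\ref{rac-e}) are accurate and match what the paper's ``straightforward, but unpleasant calculations'' actually involve.

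There is, however, one step whose justification is wrong as stated: the claim that, when invoking Lemma \ref{lema}, ``by density of elementary tensors it suffices to test $\xi=\eta\otimes\eta'$.'' Density of elementary tensors is a statement about the inputs $a\otimes b\in\mathcal A\otimes\mathcal B$, not about the units of $E\otimes F$; the units of the form $\eta\otimes\eta'$ are not known to exhaust, or to be dense in, $\mathcal U_{E\otimes F}$ --- indeed the paper explicitly remarks after the next proposition that the Arveson-case result ``every unit of $E\otimes F$ is of the form $u\otimes v$'' is not available here. So you cannot reduce the quantifier ``for all $\xi\in\mathcal U_{E\otimes F}$'' to elementary-tensor test units. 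The repair is already in the paper: by Remark \ref{lemaremark}, to conclude $u=v$ from Lemma \ref{lema} one only needs $\L^{u,\xi}=\L^{v,\xi}$ for $\xi\in\{u,v\}$, i.e.\ the equality of the four kernels $\L^{u,u}$, $\L^{u,v}$, $\L^{v,u}$, $\L^{v,v}$. For each of (\ref{rac-c})--(\ref{rac-e}) both candidate units are built from elementary-tensor units by the module operations, so these four kernels are computable from (\ref{rac-a}) together with (\ref{sabiranje}), (\ref{mnozenje}), (\ref{mnozenje-levo}) --- exactly the computation you describe --- and no statement about general units of $E\otimes F$ is needed. With that substitution of justification your argument is complete and is essentially the paper's proof.
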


\begin{proof}(\ref{rac-a}) Straightforward calculation;

(\ref{rac-b}) Follows from (\ref{rac-a}) and definition of the inner product;

(\ref{rac-c}) Using part (\ref{rac-a}), (\ref{mnozenje}) and (\ref{mnozenje-levo}), after straightforward,
but unpleasant calculations we conclude that all kernels:
$$\L^{(x\otimes\omega')\cdot(\alpha\otimes1'),(x\otimes\omega')\cdot(\alpha\otimes1')}(a\otimes b)\qquad
\L^{(x\otimes\omega')\cdot(\alpha\otimes1'),x\cdot\alpha\otimes\omega'}(a\otimes b)$$
$$\L^{x\cdot\alpha\otimes\omega',(x\otimes\omega')\cdot(\alpha\otimes1')}(a\otimes b)\qquad
\L^{x\cdot\alpha\otimes\omega',x\cdot\alpha\otimes\omega'}(a\otimes b)$$
are equal to
\begin{multline*}a\otimes\L^{\omega',\omega'}(b)+(\alpha^*\L^{x,x}(a)\alpha+\alpha^*\L^{x,\omega}(a)(1-\alpha)+\\
    +(1-\alpha^*)\L^{\omega,x}(a)\alpha+(1-\alpha^*)\L^{\omega,\omega}(a)(1-\alpha))\otimes b.
\end{multline*}
By this, Lemma \ref{lema} and Remark \ref{lemaremark} we conclude the first equality. The second follows
similarly.

(\ref{rac-d}) After few steps we get
\begin{multline*}\L^{x\otimes\omega'+\omega\otimes y,x\otimes\omega'+\omega\otimes y}(a\otimes b)=
\L^{x\otimes\omega'+\omega\otimes y,x\otimes y}(a\otimes b)=\L^{x\otimes y,x\otimes\omega'+\omega\otimes
y}(a\otimes b)=\\=\L^{x\otimes y,x\otimes y}(a\otimes b)=a\otimes\L^{y,y}(b)+ \L^{x,x}(a)\otimes b;
\end{multline*}

(\ref{rac-e}) Once again, using part (\ref{rac-a}), (\ref{mnozenje}) and (\ref{mnozenje-levo}) we conclude
that all kernels
$$\L^{(x\otimes\omega')\cdot(1\otimes\beta),(x\otimes\omega')\cdot(1\otimes\beta)}(a\otimes b)\qquad
\L^{(x\otimes\omega')\cdot(1\otimes\beta),(1\otimes\beta)\cdot(x\otimes\omega')}(a\otimes b)$$
$$\L^{(1\otimes\beta)\cdot(x\otimes\omega'),(x\otimes\omega')\cdot(1\otimes\beta)}(a\otimes b)\qquad
\L^{(1\otimes\beta)\cdot(x\otimes\omega'),(1\otimes\beta)\cdot(x\otimes\omega')}(a\otimes b)$$
are equal to
\begin{multline*}a\otimes\L^{\omega',\omega'}(b)+\L^{x,x}(a)\otimes\beta^*b\beta+\L^{\omega,x}(a)\otimes(1'-\beta^*)b\beta+\\
\L^{x,\omega}(a)\otimes\beta^*b(1'-\beta)+\L^{\omega,\omega}(a)\otimes(1'-\beta^*)b(1-\beta);
\end{multline*}

(\ref{rac-f}) Follows easily from (\ref{rac-b}).
\end{proof}

\begin{rmr}Note that, in general, $(x\otimes y)\cdot(\alpha\otimes\beta)\neq(x\cdot\alpha)\otimes(y\cdot\beta)$, so that
$\ind(E\otimes F)$ can not be considered as a tensor product of $\ind E$ and $\ind F$.
\end{rmr}

\begin{prp}The mapping $T:(\ind(E)\otimes\mathcal B)\oplus(\mathcal A\otimes\ind(F))\to\ind(E\otimes F)$ defined
on the elementary tensors from the dense subset $\left((\mathcal U_E/\sim)\otimes\mathcal
B\right)\oplus\break\left(\mathcal A\otimes(\mathcal U_F/\sim)\right)$ by
$$T([x]\otimes\beta,\alpha\otimes[y])=[(x\otimes\omega')\cdot(1\otimes\beta)+(\alpha\otimes
1')\cdot(\omega\otimes y)]$$
is a module homomorphism and isometric embedding.
\end{prp}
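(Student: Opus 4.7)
The plan is to establish that $T$ is both a bimodule map and an isometry on the generating elementary tensors, and then to obtain well-definedness modulo $\sim$ and extension by continuity to the Hilbert module completions as corollaries. For the bimodule property on the first summand, the right action of $\alpha_0\otimes\beta_0\in\mathcal{A}\otimes\mathcal{B}$ is handled by Proposition \ref{racunica sa otimes}(\ref{rac-c}):
\[
T(([x]\otimes\beta)\cdot(\alpha_0\otimes\beta_0))=[(x\cdot\alpha_0\otimes\omega')\cdot(1\otimes\beta\beta_0)]=[(x\otimes\omega')\cdot(\alpha_0\otimes\beta\beta_0)]=T([x]\otimes\beta)\cdot(\alpha_0\otimes\beta_0).
\]
The second summand is symmetric, and the two left actions reduce to the same calculation after using Proposition \ref{racunica sa otimes}(\ref{rac-e}) to commute an $\mathcal{A}$-factor past a $\mathcal{B}$-factor.

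The core computation is the isometry. On a pair $([x]\otimes\beta+\alpha\otimes[y],\,[x']\otimes\beta'+\alpha'\otimes[y'])$, I would expand $\skp{T(\cdot)}{T(\cdot)}$ as four terms. Hilbert-module right-compatibility together with Proposition \ref{racunica sa otimes}(\ref{rac-b}) and the vanishings $\skp{\omega}{\omega}=\skp{\omega'}{\omega'}=0$ (immediate from the definition of $\skp{\cdot}{\cdot}$ at reference units) reduce the two diagonal terms to $\skp{x}{x'}\otimes\beta^*\beta'$ and $\alpha^*\alpha'\otimes\skp{y}{y'}$ respectively; the second diagonal additionally requires first rewriting $(\alpha\otimes 1')\cdot(\omega\otimes y)=(\omega\otimes y)\cdot(\alpha\otimes 1')$ via Proposition \ref{racunica sa otimes}(\ref{rac-e}). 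Each of the two cross terms, after the same commutation, becomes a scalar multiple of $\skp{x\otimes\omega'}{\omega\otimes y'}$, which vanishes by Proposition \ref{racunica sa otimes}(\ref{rac-f}). Summing, the four terms reproduce $\skp{x}{x'}\otimes\beta^*\beta'+\alpha^*\alpha'\otimes\skp{y}{y'}$, exactly the inner product on $(\ind(E)\otimes\mathcal{B})\oplus(\mathcal{A}\otimes\ind(F))$.

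Once isometry is established, well-definedness modulo $\sim$ follows automatically and $T$ extends uniquely to a bimodule isometric embedding on the completions. The one non-routine step is the cross-term vanishing: without the commutation in Proposition \ref{racunica sa otimes}(\ref{rac-e}) the mixed inner products cannot be rearranged into a form where Proposition \ref{racunica sa otimes}(\ref{rac-f}) applies, and I expect this to be the main obstacle to recognise before the four-term expansion collapses cleanly.
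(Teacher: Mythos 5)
Your proposal is correct and follows essentially the same route as the paper: both rest on parts (\ref{rac-b}), (\ref{rac-c}), (\ref{rac-e}) and (\ref{rac-f}) of Proposition \ref{racunica sa otimes} to reduce the inner product of images to $\skp xx\otimes\beta^*\beta+\alpha^*\alpha\otimes\skp yy$, from which well-definedness modulo $\sim$ and the isometric embedding follow, with the module-homomorphism property checked via (\ref{rac-c}) and (\ref{rac-e}). The only cosmetic difference is that you carry out the full four-term sesquilinear expansion, whereas the paper computes the single diagonal term $\skp zz$ (equivalent by polarization).
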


\begin{proof}First, taking into account Proposition \ref{racunica sa otimes} (parts (\ref{rac-f}) and (\ref{rac-b})), for
$z=(x\otimes\omega')\cdot(1\otimes\beta)+(\alpha\otimes 1')\cdot(\omega\otimes y)$ we obtain
\begin{equation}\label{outerT}
\skp zz=\skp xx\otimes\beta^*\beta+\alpha^*\alpha\otimes\skp yy=\skp{(x\otimes\beta,\alpha\otimes
y)}{(x\otimes\beta,\alpha\otimes y)}.
\end{equation}
Hence, we get that $T$ is well defined. Indeed, if $x\sim x_1$ then
$(x\otimes\omega')\cdot(1\otimes\beta)\sim(x_1\otimes\omega')\cdot(1\otimes\beta)$, since their difference
multiplied by itself is equal to zero. Similarly for $y\sim y_1$.

Additivity is obvious.

For right multiplication, using Proposition \ref{racunica sa otimes} parts (\ref{rac-c}) and (\ref{rac-e}),
we get
$$\begin{aligned}T(([x]\otimes\beta,\alpha\otimes[y])\cdot(a\otimes b))=&T([x\cdot a]\otimes\beta b,\alpha
a\otimes[y\cdot b])=\\
=&[(x\otimes\omega')\cdot(a\otimes\beta b)+(\omega\otimes y)\cdot(\alpha a\otimes b)]=\\
=&T([x]\otimes\beta,\alpha\otimes[y])\cdot(a\otimes b),
\end{aligned}$$
and similarly for left multiplication.

Finally, from (\ref{outerT}) it follows that $T$ is an isometry, and hence embedding.
\end{proof}

\begin{rmr}In Arveson case, i.e.\ in the case $\mathcal A=\mathcal B=\mathrm C$, the above embedding is
actually an isomorphism, due to \cite[Theorem 3.7.2 and Corollary 3.7.3]{Arv03} which asserts that any unit
$w$ in $E\otimes F$ has the form $w=u\otimes v$ for some units $u$ in $E$, and $v$ in $F$. However, almost
every substantial step in the proof of these statements fails in general situation. Therefore, it should find
either entirely different proof, or a suitable counterexample.
\end{rmr}

\section{(Sub)spatial product systems}\label{secspatial}

In this section we prove that the index of a spatial or a subspatial product system can be described more
precisely. In more details, the relations $\sim$ and $\approx$ coincide, $\mathcal U$ can be recovered from
$\ind(E)$ as $\mathcal U\cong\mathcal B\oplus\ind(E)$, and finally, completion in the Definition
\ref{definicija indeksa} is not necessary. Some of these properties can be obtained using the fact that any
spatial product system contains a subsystem isomorphic to a time ordered Fock module \cite[Theorem
6.3]{KPR06}. However, the proofs presented here are independent of this characterization, and henceforth they
don't use Kolmogorov decomposition of completely positive definite kernels.

We begin with the definition of spatial \cite[Section 2]{KPR06} and subspatial product system.

\begin{dfn}The spatial product system is a product system that contains a central unital unit. The system is
subspatial if it can be embedded into a spatial one.
\end{dfn}

\begin{rmr}Recall that unit $\omega$ is central if and only if $b\omega_t=\omega_tb$ for all $b\in\mathcal B$
and all $t\ge0$. Such a unit might not exist (see \cite[Example 4.2.4]{JFA04}). However, its well behaviour
allows to obtain plenty of interesting results.

In view of Lemma \ref{normirana jedinica}, it is enough to assume that $E$ admits a central continuous unit,
instead of assuming that it admits a central unital unit.

Note, also, that subspatial system might not be spatial (see \cite[Section 3]{PAMS10}). The converse is
trivially satisfied.
\end{rmr}

Throughout this section, the reference unit $\omega$ is always assumed to be central and we shall assume that
it is specified, even though it is not emphasized.

The following Lemma establishes the most important property of central units. Although it is very simple and
seen in many papers, we give its proof for the convenience of the reader.

\begin{lmm}If a unit $\omega$ is central then for all $b\in\mathcal B$ and all $x\in\mathcal U$
\begin{equation}\label{svojstva centralne}
\L^{x,\omega}(b)=\L^{x,\omega}(1)b.
\end{equation}

Then, also $\L^{\omega,x}(b)=b\L^{\omega,x}(1)$.
\end{lmm}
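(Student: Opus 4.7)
The plan is to pull the element $b$ through the inner product using centrality of $\omega$, then differentiate at $t=0$.

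First I would exploit the definition of the kernels via the semigroup: by (\ref{K preko L}),
$$\mathcal{K}^{x,\omega}_t(b)=\langle x_t, b\omega_t\rangle.$$
Because $\omega$ is central, $b\omega_t=\omega_t b$, and because the inner product of a Hilbert module is right $\mathcal{B}$-linear in the second argument,
$$\langle x_t, b\omega_t\rangle=\langle x_t, \omega_t b\rangle=\langle x_t, \omega_t\rangle b=\mathcal{K}^{x,\omega}_t(1)\,b.$$
Hence $\mathcal{K}^{x,\omega}_t(b)=\mathcal{K}^{x,\omega}_t(1)\,b$ holds for every $t\ge0$.

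Next I would differentiate this identity at $t=0$. Since $\mathcal{L}^{x,\omega}=\frac{d}{dt}\mathcal{K}^{x,\omega}|_{t=0}$, and since $\mathcal{K}^{x,\omega}_0(1)=\langle\omega_0,\omega_0\rangle$ contributions drop out (the constant term $b$ on the right equals $\mathcal{K}^{x,\omega}_0(1)\,b$ because $\mathcal{K}^{x,\omega}_0=\mathrm{id}_\mathcal{B}$), we get $\mathcal{L}^{x,\omega}(b)=\mathcal{L}^{x,\omega}(1)\,b$, which is (\ref{svojstva centralne}).

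For the second claim, the cleanest route is via the symmetry relation (\ref{obrat}), $\mathcal{L}^{\omega,x}(b)=(\mathcal{L}^{x,\omega}(b^*))^*$. Applying (\ref{svojstva centralne}) to $b^*$ gives $\mathcal{L}^{x,\omega}(b^*)=\mathcal{L}^{x,\omega}(1)b^*$, and taking the adjoint yields $\mathcal{L}^{\omega,x}(b)=b\,\mathcal{L}^{x,\omega}(1)^{*}=b\,\mathcal{L}^{\omega,x}(1)$, as required. There is no real obstacle: the only point to watch is that differentiation commutes with right multiplication by the fixed element $b\in\mathcal{B}$, which is immediate from uniform (norm) continuity of $\mathcal{K}^{x,\omega}_t$ on $L(\mathcal{B})$ and the fact that right multiplication by $b$ is a bounded linear operator on $\mathcal{B}$.
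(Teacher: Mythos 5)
Your proof is correct and follows essentially the same route as the paper: centrality plus right $\mathcal B$-linearity gives $\skp{x_t}{b\omega_t}=\skp{x_t}{\omega_t}b$, and passing to the difference quotient at $t=0$ yields (\ref{svojstva centralne}); the second claim then follows from (\ref{obrat}) exactly as you describe.
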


\begin{proof}If $\omega$ is central, we have
$$\L^{x,\omega}(b)=\lim_{t\to0+}\frac{\skp{x_t}{b\omega_t}-b}t=\lim_{t\to0+}\frac{\skp{x_t}{\omega_t}-1}tb=\L^{x,\omega}(1)b.$$
\end{proof}

The next Proposition allows us to translate the statements proved for spatial product systems to subspatial.

\begin{prp}Let $E$ be a subspatial product system embedded into a spatial system $\hat E$ with a central
unit $\hat\omega$, and let $\omega$ be an arbitrary unit on $E$. Then the mapping
$$\Phi:\mathcal U_E\to\{x-\omega\:|\:x\in\mathcal U_E\}\subseteq\mathcal U_{\hat E},\quad\Phi(x)=x-\omega,$$
is an embedding, where substraction is that in $\mathcal U_{\hat E}$, i.e.\
$\Phi(x)=x-\omega=x\boxplus\hat\omega\boxplus(-\omega)$.

In other words, $\mathcal U_E$ is an affine subspace of $\mathcal U_{\hat E}$.
\end{prp}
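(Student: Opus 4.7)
The plan is to verify that $\Phi$ is well-defined, injective, and preserves the module operations — identifying the reference-$\omega$ structure on $\mathcal U_E$ with the reference-$\hat\omega$ structure on $\Phi(\mathcal U_E)\subseteq\mathcal U_{\hat E}$ — and is moreover inner-product preserving; this realizes $\mathcal U_E$ as an affine subspace of $\mathcal U_{\hat E}$. All four properties collapse to a single kernel identity together with Lemma \ref{lema}.

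Concretely, from $\Phi(x)=x\boxplus\hat\omega\boxplus(-\omega)$ (the coefficients $1,1,-1$ sum to $1$, so Proposition \ref{LS} applies) and the additivity of $\L^{\cdot,\xi}$ under $\boxplus$-combinations one obtains
$$
\L^{\Phi(x),\xi}=\L^{x,\xi}+\L^{\hat\omega,\xi}-\L^{\omega,\xi}\qquad(\xi\in\mathcal U_{\hat E}).
$$
For this to be meaningful one first needs $\omega\sim\hat\omega$ in $\hat E$; I would extract this from the facts about continuous product systems recalled just before Definition \ref{Contstr}, using that the continuous structure making $\hat\omega$ a continuous section of $\hat E$ restricts to $E$ and has $\omega$ as a continuous section there, so $\omega$ and $\hat\omega$ lie in a common equivalence class of continuous units in $\hat E$. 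This is the only place where spatiality (i.e.\ centrality of $\hat\omega$) does any real work.

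Given the identity, the rest is mechanical. Injectivity: if $\Phi(x)=\Phi(y)$ the identity forces $\L^{x,\xi}=\L^{y,\xi}$ for every $\xi\in\mathcal U_{\hat E}$, in particular for every $\xi\in\mathcal U_E$, and Lemma \ref{lema} gives $x=y$. Preservation of the operations: computing the sum and scalar multiples in $\mathcal U_{\hat E}$ via (\ref{sabiranje}), (\ref{mnozenje}), (\ref{mnozenje-levo}) with reference $\hat\omega$, and substituting the identity, I would check that $\L^{\Phi(x+y),\xi}$ and the kernel of $\Phi(x)+\Phi(y)$ (computed in $\mathcal U_{\hat E}$) both reduce to $\L^{x,\xi}+\L^{y,\xi}+\L^{\hat\omega,\xi}-2\L^{\omega,\xi}$, with completely analogous one-line reductions for $\Phi(a\cdot x)$ versus $a\cdot\Phi(x)$ and $\Phi(x\cdot a)$ versus $\Phi(x)\cdot a$; Lemma \ref{lema} then turns each kernel equality into an equality of units. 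Finally, expanding $\langle\Phi(x),\Phi(y)\rangle$ by (\ref{skpdef}) and using $\L^{\hat\omega,\hat\omega}=0$ (a consequence of $\hat\omega$ being a unital central unit), every term involving $\hat\omega$ cancels and one is left with $\langle x,y\rangle$, so $\Phi$ is even isometric.

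The main obstacle is conceptual rather than computational: establishing $\omega\sim\hat\omega$ in $\hat E$, which is what legitimizes the translation by $\omega$ in the first place. Once that preliminary is settled, the entire verification is bookkeeping on $\boxplus$-expressions in which all reference-unit contributions cancel in exactly the manner the module operations were designed to ensure.
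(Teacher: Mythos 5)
Your proposal is correct and follows essentially the same route as the paper: the paper's proof is exactly the $\boxplus$-bookkeeping you describe (rearranging $x\boxplus\hat\omega\boxplus(-\omega)$ expressions, justified by the same kernel identities and Lemma \ref{lema}), together with the observation that $\skp{\Phi(x)}{\Phi(y)}=\skp xy$. Two minor remarks: the isometry does not actually require $\L^{\hat\omega,\hat\omega}=0$, since all $\hat\omega$-terms cancel identically in $\L^{\Phi(x),\Phi(y)}-\L^{\Phi(x),\hat\omega}-\L^{\hat\omega,\Phi(y)}+\L^{\hat\omega,\hat\omega}$; and your concern that $\omega\sim\hat\omega$ in $\hat E$ must be secured before the $\boxplus$-expressions make sense is legitimate, a point the paper leaves implicit.
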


\begin{proof}Indeed,
\begin{multline*}\Phi(x+y)=\Phi(x\boxplus y\boxplus(-\omega))=(x\boxplus
y\boxplus(-\omega)\boxplus\hat\omega\boxplus(-\omega)=\\=(x\boxplus\hat\omega\boxplus(-\omega))\boxplus
(y\boxplus\hat\omega\boxplus(-\omega))\boxplus(-\hat\omega)=\Phi(x)+\Phi(y)
\end{multline*}
and also
\begin{multline*}\Phi(x\cdot
a)=\Phi(xa\boxplus\omega(1-a))=(xa\boxplus\omega(1-a))\boxplus\hat\omega\boxplus(-\omega)=\\
(x\boxplus\hat\omega\boxplus(-\omega))a\boxplus\hat\omega(1-a)=\Phi(x)\cdot a
\end{multline*}
and similarly for $\Phi(a\cdot x)$. Finally, we easily find that $\skp{\Phi(x)}{\Phi(y)}=\skp xy$.
\end{proof}

The following Proposition establishes that the relations $\approx$ and $\sim$ coincide.

\begin{prp}\label{sim-approx}
Let $E$ be a subspatial product system. Then the equivalence relation $\sim$ from Definition
\ref{definicija indeksa} is characterized as follows:
$$x\sim y\Longleftrightarrow x=y^\beta,\quad\mbox{for some }\beta\in\mathcal B.$$
\end{prp}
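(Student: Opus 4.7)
The plan is to prove the two directions separately. The reverse implication $(\Leftarrow)$ is a direct computation that does not need the (sub)spatial hypothesis. If $x=y^\beta$, then $x_t=y_te^{\beta t}$, so $x_t-y_t=y_t(e^{\beta t}-1)$ and the right $\mathcal B$-linearity of the inner product in $E_t$ yields
$$\skp{x_t-y_t}{x_t-y_t}=(e^{\beta t}-1)^*\skp{y_t}{y_t}(e^{\beta t}-1)=t^2\beta^*\beta+O(t^3).$$
Dividing by $t$ and passing to the limit, Proposition \ref{skp} part (\ref{skp-viii}) gives $\skp{x-y}{x-y}=0$, that is $x\sim y$.

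For the forward direction $(\Rightarrow)$ I first treat the spatial case, where the reference unit $\omega$ is central. From $x\sim y$ and Proposition \ref{skp} (\ref{skp-viii}) we have $\skp{x_t-y_t}{x_t-y_t}=o(t)$, while a Taylor expansion of the CPD kernels gives $\skp{\xi_t-\omega_t}{\xi_t-\omega_t}=O(t)$ for every $\xi\in\mathcal U$. Applied in the Hilbert module $E_t$, the Cauchy--Schwarz inequality then forces, for every $b\in\mathcal B$,
$$\|\skp{x_t-y_t}{b(\xi_t-\omega_t)}\|^2\le\|b\|^2\,\|\skp{x_t-y_t}{x_t-y_t}\|\,\|\skp{\xi_t-\omega_t}{\xi_t-\omega_t}\|=o(t^2),$$
so $\skp{x_t-y_t}{b(\xi_t-\omega_t)}=o(t)$. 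Expanding the same quantity to first order in $t$ using the definition of the generators, the coefficient of $t$ must vanish, giving
$$\mathcal L^{x,\xi}(b)-\mathcal L^{y,\xi}(b)=\mathcal L^{x,\omega}(b)-\mathcal L^{y,\omega}(b).$$
Centrality of $\omega$ now enters decisively: by (\ref{svojstva centralne}) the right-hand side equals $\beta^*b$ with $\beta^*:=\mathcal L^{x,\omega}(1)-\mathcal L^{y,\omega}(1)\in\mathcal B$, a single element independent of $b$ and of $\xi$. Comparing with (\ref{stepenovanje beta}) gives $\mathcal L^{x,\xi}=\mathcal L^{y^\beta,\xi}$ for every $\xi$, and Lemma \ref{lema} yields $x=y^\beta$.

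For the subspatial case, embed $E$ into a spatial $\hat E$ with central unit $\hat\omega$. Both the relation $\sim$ (via (\ref{skp-viii})) and all generators $\mathcal L^{x,\xi}$ with $x,\xi$ units on $E$ are computed from the $\hat E$-inner products restricted to the submodules $E_t\subseteq\hat E_t$, hence coincide whether viewed in $E$ or in $\hat E$. Running the above argument inside $\hat E$ with $\hat\omega$ as the central reference produces $\beta\in\mathcal B$ such that $\mathcal L^{x,\xi}=\mathcal L^{y^\beta,\xi}$ for every $\xi\in\mathcal U_{\hat E}$, and in particular for every $\xi\in\mathcal U_E$; a final application of Lemma \ref{lema} inside $E$ delivers $x=y^\beta$. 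The main obstacle is the Cauchy--Schwarz step, which is tight: each factor individually is only $O(t^{1/2})$, and it is the combination of the strict $o(t)$ estimate coming from $x\sim y$ with the $O(t)$ continuity estimate for $\xi_t-\omega_t$ that just barely kills the linear term. Centrality of the reference unit then plays the essential role of collapsing the $b$-dependent coefficient on the right into multiplication by a single $\beta\in\mathcal B$.
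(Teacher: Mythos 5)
Your forward implication is correct and reaches the same algebraic endpoint as the paper --- the identity $\L^{x,\xi}(b)-\L^{y,\xi}(b)=\L^{x,\omega}(b)-\L^{y,\omega}(b)$, collapsed by centrality of $\omega$ into $\beta^*b$ and finished with Lemma \ref{lema} --- but you get there by a different technical route. The paper first reduces (via compatibility of $\sim$ and $\approx$ with the module operations) to showing $\skp xx=0\Rightarrow x\approx\omega$, then uses the monotonicity property $\skp xx_{\tilde b}\le\skp xx_1$ of Proposition \ref{skp}(\ref{skp-vii}) together with the Cauchy--Schwartz inequality for the semi-inner products $\skp\cdot\cdot_{\tilde b}$ on $\mathcal U$ to conclude $\skp xy_{\tilde b}=0$ for all $y$ and all positive $b$. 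You instead work directly in the fibres $E_t$, pairing the $o(t)$ estimate for $\skp{x_t-y_t}{x_t-y_t}$ against the $O(t)$ estimate for $\skp{\xi_t-\omega_t}{\xi_t-\omega_t}$ via Cauchy--Schwartz in the Hilbert module $E_t$, and then read off the vanishing of the first-order coefficient. This buys independence from the machinery of the auxiliary pairings $\skp\cdot\cdot_b$ (you never need part (\ref{skp-vii})), at the cost of a slightly more delicate asymptotic bookkeeping; both arguments lean on centrality at exactly the same point. Your reduction of the subspatial case to the spatial one matches the paper's.

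The one genuine misstep is in your reverse implication: the identity $x^\beta_t=y_te^{\beta t}$ is false in general. The map $t\mapsto y_te^{\beta t}$ need not be a unit at all, since $y_te^{\beta t}\otimes y_se^{\beta s}=y_t\otimes e^{\beta t}y_se^{\beta s}$, and $e^{\beta t}$ need not commute with the left action on $y_s$; the unit $x^\beta$ is only defined through Proposition \ref{LS} as the limit of $(y_{t/n}e^{\beta t/n})^{\otimes n}$, and is characterized by its kernels (\ref{stepenovanje beta}), not by a pointwise formula. The conclusion you want is nevertheless immediate without it: either invoke Proposition \ref{skp}(\ref{skp-v}), or expand $\skp{x_t-y_t}{x_t-y_t}=t\bigl(\L^{x,x}(1)-\L^{x,y}(1)-\L^{y,x}(1)+\L^{y,y}(1)\bigr)+O(t^2)$ and check that the linear coefficient vanishes when the kernels of $x=y^\beta$ are substituted from (\ref{stepenovanje beta}). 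With that repair the proof is complete.
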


\begin{proof} Let us, first, assume that $E$ is spatial, and that $\omega$ is its central unit. Since both
relations $\approx$ and $\sim$ are compatible with algebraic operations and $x+\omega^\beta=x^\beta$, it
suffices to prove that $\skp xx=0$ implies $x\approx\omega$, i.e. $x^\beta=\omega$ for some $\beta\in
\mathcal B$.

Let $\skp xx_1=0$, let $b \in \mathcal{B}$,\ $b\geq0$ and let denote $\tilde{b}=b/\|b\|$. The element
$\tilde{b}$ is positive and $1-\tilde{b}\ge 0$. By Proposition \ref{skp} (\ref{skp-vii}) we have
\begin{equation}\label{b je manje}
\skp xx_{\tilde b}\le\skp xx,
\end{equation}
and hence $\langle x,x \rangle_{\tilde{b}}=0$.

From Cauchy Schwartz inequality (\ref{CauchySchwartz}), we have $\langle x,y \rangle_{\tilde{b}}=0$ for all
$y \in \mathcal{U}$. Let $\beta=\mathcal{L}^{\omega,\omega}(1)-\mathcal{L}^{\omega,x}(1) \in \mathcal{B}$.
Then $\mathcal{L}^{x^{\beta},y}(\tilde{b})=\mathcal L^{x,y}(\tilde b)+\beta^*\tilde
b=\mathcal{L}^{\omega,y}(\tilde{b})$ and hence $\mathcal{L}^{x^{\beta},y}(b)=\mathcal{L}^{\omega,y}(b)$.
Since every element in $\mathcal{B}$ is a linear combination of at most four positive elements, we get
$\mathcal{L}^{x^{\beta},y}=\mathcal{L}^{\omega,y}$. Using Lemma \ref{lema}, we conclude $x^\beta=\omega$.

Let, now, $E$ be a subspatial product system. Then it can be embedded in some spatial system $\hat E$ that
contains a central unit $\hat\omega$. If $x\sim y$, then, obviously, $x-\omega\sim y-\omega$, and by previous
part, $x-\omega=(y-\omega)^\beta$ for some $\beta\in\mathcal B$, which is equal to $y^\beta-\omega$ by
Theorem \ref{modul}. Hence $x=y^\beta$.
\end{proof}

\begin{thm}\label{kompletnost}
If $E$ is subspatial product system, then $\mathcal{U}/_{\sim}$ is a Hilbert left-right $\mathcal B$ -
$\mathcal{B}$-module.
\end{thm}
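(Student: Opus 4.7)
The plan is to prove that $\mathcal{U}/\sim$ is already complete as a pre-Hilbert module by constructing, for each Cauchy sequence $\{[x_n]\}$ in $\mathcal{U}/\sim$, an explicit unit $x\in\mathcal{U}$ whose class is the required limit. The strategy has three main stages: reduction to the spatial case via the embedding $\Phi$ established just above; normalization of representatives using the equality $\sim\,=\,\approx$ from Proposition \ref{sim-approx}; and fiberwise construction of the limit unit exploiting the tensor-product structure of $E$.

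For the reduction, embed the subspatial $E$ into a spatial $\hat E$ with central unit $\hat\omega$. The embedding $\Phi$ is isometric, and because each $E_t$ is a norm-closed submodule of $\hat E_t$, any limit unit whose fibers are obtained as norm-limits of fibers of units on $E$ is itself a unit on $E$. Hence it suffices to construct the limit in the spatial case. Working there, replace each $x_n$ by $x_n^{\beta_n}$ with $\beta_n=\mathcal L^{\omega,\omega}(1)^*-\mathcal L^{x_n,\omega}(1)^*$; by (\ref{stepenovanje beta}) and (\ref{svojstva centralne}) this yields the uniform operator identities $\mathcal L^{x_n,\omega}=\mathcal L^{\omega,x_n}=\mathcal L^{\omega,\omega}$, so that $\langle x_n,x_n\rangle_b=(\mathcal L^{x_n,x_n}-\mathcal L^{\omega,\omega})(b)$. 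Combined with boundedness of a Cauchy sequence and Proposition \ref{skp} (\ref{skp-vii}), this forces a uniform operator-norm bound $\|\mathcal L^{x_n,x_n}\|\le M$ and consequently $\|(x_n)_t\|\le e^{Mt/2}$ on every compact interval of $t$, uniformly in $n$.

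The core step is the fiberwise construction. By Proposition \ref{skp} (\ref{skp-viii}), Cauchy-ness of $\{[x_n]\}$ yields $\|(x_n)_t-(x_m)_t\|^2=O(t\,\|\langle x_n-x_m,x_n-x_m\rangle\|)$ for sufficiently small $t$, so $\{(x_n)_t\}$ is Cauchy in $E_t$ on such a range. To propagate this to every $t>0$, apply the telescoping identity
\begin{equation*}
(x_n)_t-(x_m)_t=\sum_{j=0}^{k-1}(x_n)_{t/k}^{\otimes j}\otimes\bigl((x_n)_{t/k}-(x_m)_{t/k}\bigr)\otimes(x_m)_{t/k}^{\otimes(k-1-j)}
\end{equation*}
together with the uniform fiber bound to conclude Cauchy-ness of $\{(x_n)_t\}$ in every $E_t$. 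Define $x_t:=\lim_n(x_n)_t$; passing to the limit in the identities $(x_n)_0=1$ and $(x_n)_{s+t}=(x_n)_s\otimes(x_n)_t$ shows $x=(x_t)_{t\ge0}$ is a unit, continuity of which follows from the uniform operator-norm convergence of the kernels $\mathcal L^{x_n,\cdot}$. Finally, Proposition \ref{skp} (\ref{skp-viii}) yields $[x_n]\to[x]$ in $\mathcal U/\sim$.

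The main obstacle is propagating the Cauchy estimate from the infinitesimal scale $t\to0+$, where Proposition \ref{skp} (\ref{skp-viii}) directly applies, to uniform Cauchy-ness of fibers on every compact interval. This step depends crucially on both the uniform kernel bound $\|\mathcal L^{x_n,x_n}\|\le M$ furnished by the spatial normalization (without which the fibers $(x_n)_t$ could blow up in norm as $n$ grows) and a careful estimate of the telescoping sum (without which the infinitesimal smallness cannot be converted into smallness at scale $t$). The interplay of these estimates, together with verification that the limiting unit inherits continuity from the uniform-in-$n$ kernel convergence, constitutes the technical heart of the argument.
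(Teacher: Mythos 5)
Your reduction to the spatial case, your normalization of the representatives, and the uniform bound $\|\mathcal L^{x_n,x_n}\|\le M$ all match the paper's proof and are fine. The gap is in the core step. Proposition \ref{skp} (\ref{skp-viii}) is a statement about a limit as $t\to0+$ for \emph{one fixed pair} of units; it does not give $\|(x_n)_t-(x_m)_t\|^2=O\bigl(t\,\|\langle x_n-x_m,x_n-x_m\rangle\|\bigr)$ on a range of $t$ independent of $n,m$. Indeed, abbreviating $A=\mathcal L^{x_n,x_n}$, $B=\mathcal L^{x_n,x_m}$, $C=\mathcal L^{x_m,x_n}$, $D=\mathcal L^{x_m,x_m}$, one has
\begin{equation*}
\langle (x_n)_t-(x_m)_t,(x_n)_t-(x_m)_t\rangle=\sum_{k\ge1}\frac{t^k}{k!}\bigl(A^k-B^k-C^k+D^k\bigr)(1),
\end{equation*}
whose $k=1$ term is the one you control, while the $k\ge2$ remainder is of order $t^2e^{tM}\bigl(\|A-B\|+\|C-D\|\bigr)$. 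The uniform bound $M$ alone does not make this tail vanish as $n,m\to\infty$; you must additionally prove that the normalized kernels $\mathcal L^{x^{\beta_n},x^{\beta_n}}$ and $\mathcal L^{x^{\beta_n},y}$ (uniformly over $y$ with $\|\langle y,y\rangle\|\le1$) form Cauchy sequences in the operator norm of $L(\mathcal B)$. That is exactly the central computation of the paper's proof (obtained from Proposition \ref{skp} (\ref{skp-vii}), the Cauchy--Schwartz inequality for $\langle\cdot,\cdot\rangle_{\tilde b}$, and the decomposition of a general $b$ into four positive elements), and you never establish it --- although you silently invoke ``uniform operator-norm convergence of the kernels'' at the very end to get continuity of the limit unit.

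Two further remarks. First, once the operator-norm convergence of the kernels is in hand, your telescoping identity becomes superfluous: $\langle (x^{\beta_n})_t,(x^{\beta_m})_t\rangle=e^{t\mathcal L^{x^{\beta_n},x^{\beta_m}}}(1)$ then converges for every fixed $t$ by uniform convergence of the exponential series, so the fibers are Cauchy in each $E_t$ directly --- this is how the paper concludes. Second, the telescoping cannot by itself repair the gap: since $\|(x_n)_{t/k}-(x_m)_{t/k}\|$ decays only like $(t/k)^{1/2}$, your $k$-term sum is of order $(tk)^{1/2}$, so you cannot let $k\to\infty$ and must stop at a finite $k$ determined by a threshold scale, uniform in $n,m$, at which the infinitesimal estimate is already quantitative --- and producing such a threshold is precisely the missing kernel estimate.
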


\begin{proof}
We, only, have to prove that $\mathcal U/\sim$ is norm complete. First, assume that $E$ is spatial.

Let $([x^n])$ be a Cauchy sequence in $\mathcal{U}/_{\sim}$, that is: for all $0<\varepsilon\leq1$ there is
$n_0 \in \mathbb{N}$ such that
\begin{equation}\label{Cauchy1}
\| \langle x^n-x^m, x^n-x^m \rangle_1 \|=\|[x^n]-[x^m]\|^2 < \varepsilon^2\quad\mbox{for }m,n \geq n_0.
\end{equation}
We would like to show that $([x^n])$ is convergent. This sequence is, of course, bounded. First, we prove
that the sequence $\skp{x^n}{x^n}_{\tilde b}$ is also a Cauchy sequence, where $\tilde{b}=b/\|b\|$, and
$b\geq0$ is arbitrary. For $m,n \geq n_0$, we have by (\ref{b je manje}),
\begin{multline}\label{velika}\| \langle x^n,x^n \rangle_{\tilde{b}}- \langle x^m,x^m \rangle_{\tilde{b}} \| \leq \\\| \langle
x^n-x^m, x^n-x^m \rangle_{\tilde{b}}\|+\| \langle x^n-x^m,x^m \rangle_{\tilde{b}}\|+\| \langle x^m,x^n-x^m
\rangle_{\tilde{b}}\| \leq\\
\leq \| \langle x^n-x^m, x^n-x^m \rangle_{1}\|+2\sqrt{\| \langle x^n-x^m, x^n-x^m \rangle_{1}\|}\sqrt{\|
\langle x^m,x^m \rangle_1 \|} <\\
< \varepsilon^2+2\varepsilon \sqrt{\| \langle x^m,x^m \rangle_1 \|} < \varepsilon\mathrm{\ const}.
\end{multline}

The unit $x^n$ is an arbitrary representative of the class $[x^n]$, and now we are going to pick the most
suitable one. Let $\beta_n=-\mathcal{L}^{\omega,x^n}(1)\in \mathcal{B}$, and let $x^{\beta_n}$ denote the
unit $(x^n)^{\beta_n}$. By (\ref{svojstva centralne}) we have $\L^{\xi,\omega}(b)=\L^{\xi,\omega}(1)b$. This
ensures that
$$\mathcal{L}^{\omega,x^{\beta_n}}(\tilde{b})=(\mathcal{L}^{x^{\beta_n},\omega}(\tilde{b}))^*=(\L^{x^n,\omega}(1)+(\beta^n)^*)\tilde b=0,$$
for $n\in\mathbb{N}$. By (\ref{b je manje}), we obtain
$$\mathcal{L}^{x^{\beta_n},x^{\beta_n}}(\tilde{b})-\mathcal{L}^{x^{\beta_m},x^{\beta_m}}(\tilde{b})=
\langle x^{\beta_n},x^{\beta_n} \rangle_{\tilde{b}}-\langle x^{\beta_m},x^{\beta_m} \rangle_{\tilde{b}}=
\langle x^{n},x^{n} \rangle_{\tilde{b}}-\langle x^{m},x^{m} \rangle_{\tilde{b}}.$$
It follows, by (\ref{velika}),
\begin{equation}\label{xbetan}
\|(\mathcal{L}^{x^{\beta_n},x^{\beta_n}}-\mathcal{L}^{x^{\beta_m},x^{\beta_m}})(b)\|< \varepsilon\
\mathrm{const} \|b\|,
\end{equation}
for any $b\geq0$. Since every element of $\mathcal B$ is a linear combination of at most four positive
elements, we conclude that $\L^{x^{\beta_n},x^{\beta_n}}$ is a Cauchy sequence in $L(\mathcal B)$,
multiplying the constant in (\ref{xbetan}) by $4$, if necessary. Hence it converges.

For every $y \in \mathcal{U}$, we have
$$\langle y,x^n-x^m \rangle_{\tilde{b}} \langle x^n-x^m,y \rangle_{\tilde{b}} \leq
\| \langle x^n-x^m, x^n-x^m \rangle_{\tilde{b}} \| \langle y,y \rangle_{\tilde{b}}.$$
By (\ref{b je manje}) and (\ref{Cauchy1}), $\| \langle x^n-x^m,y \rangle_{\tilde{b}} \| < \varepsilon
\sqrt{\| \langle y,y \rangle_1 \|}$, implying
\begin{equation}\label{xbetany}
\|(\mathcal{L}^{x^{\beta_n},y}- \mathcal{L}^{x^{\beta_m},y})(b) \| < \varepsilon \sqrt{\| \langle y,y
\rangle_1 \|} \|b\|,
\end{equation}
for all $\mathcal B\ni b\geq0$. As above, we conclude that $\L^{x^{\beta_n},y}$ is a Cauchy sequence in
$L(\mathcal B)$, and hence convergent. Moreover, it satisfies the Cauchy condition uniformly with respect to
$y$, $\|\skp yy_1\|\leq1$.

Therefore, we proved that there are $K$, $K_y\in L(\mathcal B)$ such that
\begin{equation}\label{jezgro K}
\lim\limits_{n \rightarrow {+\infty}} \| \mathcal{L}^{x^{\beta_n},x^{\beta_n}}-K \|=0,
\end{equation}
\begin{equation}\label{jezgro Ky}
\lim\limits_{n \rightarrow {+\infty}} \| \mathcal{L}^{x^{\beta_n},y}-K_y \|=0.
\end{equation}

Since $\|\mathcal{L}^{x^{\beta_n},x^{\beta_n}}\|,\ \|\mathcal{L}^{x^{\beta_n},y}\|\leq\ \mathrm{const,\ for\
}n \in \mathbb{N}$ and $||\skp yy_1||\le1$, the series
$$\sum\limits_{m=0}^{+\infty} \frac{t^m (\mathcal{L}^{x^{\beta_n},x^{\beta_n}})^m}{m!}\quad\mbox{and}\quad
\sum\limits_{m=0}^{+\infty} \frac{t^m (\mathcal{L}^{x^{\beta_n},y})^m}{m!}$$
uniformly converge with respect to $n\in\mathbb{N}$, which by Lebesgue dominant convergence theorem implies
$$\lim\limits_{n \rightarrow {+\infty}} \langle x^{\beta_n}_t,\bullet x^{\beta_n}_t \rangle=
\lim\limits_{n \rightarrow {+\infty}}e^{t \mathcal{L}^{x^{\beta_n},x^{\beta_n}}}=
\lim\limits_{n \rightarrow {+\infty}} \sum\limits_{m=0}^{+\infty} \frac{t^m (\mathcal{L}^{x^{\beta_n},x^{\beta_n}})^m}{m!}=e^{t K},$$
$$\lim\limits_{n \rightarrow {+\infty}} \langle x^{\beta_n}_t,\bullet y_t \rangle=
\lim\limits_{n \rightarrow {+\infty}}e^{t \mathcal{L}^{x^{\beta_n},y}}=
\lim\limits_{n \rightarrow {+\infty}} \sum\limits_{m=0}^{+\infty} \frac{t^m (\mathcal{L}^{x^{\beta_n},y})^m}{m!}=e^{t K_y}.$$

So,
\begin{equation}\label{velikoOprva}
\lim\limits_{n \rightarrow {+\infty}} \langle x^{\beta_n}_t,\bullet x^{\beta_n}_t \rangle=\mathrm{id}_{\mathcal{B}}+tK+O(t^2),
\end{equation}
\begin{equation}\label{velikoOdruga}
\lim\limits_{n \rightarrow {+\infty}} \langle x^{\beta_n}_t,\bullet y_t \rangle=\mathrm{id}_{\mathcal{B}}+tK_y+O(t^2).
\end{equation}

Thus, we found the kernels of the desired limit of our Cauchy sequence. We can immediately apply Proposition
\ref{LS} to bring up the unit $u_t$ with kernels $K$, $K_y$. However, it is disputable whether or not, this
unit satisfies one of conditions of Proposition \ref{LS}, and therefore, whether or not it belongs to
$\mathcal U$. So, we need to find another way to obtain $u_t$.

Let $\varepsilon>0$. Since limits $\lim\limits_{n \rightarrow {+\infty}} \langle x^{\beta_n}_t,x^{\beta_n}_t
\rangle$ and $\lim\limits_{n \rightarrow {+\infty}} \langle x^{\beta_n}_t,y_t \rangle$ exist in
$\mathcal{B}$, uniformly in $y,\ \| \langle y,y \rangle_1 \| \leq1$, there are $n_1, n_2\in \mathbb{N}$ such
that
\begin{equation}\label{epolaprva}
\|\langle x^{\beta_n}_t, x^{\beta_n}_t \rangle - \langle x^{\beta_{n_1}}_t, x^{\beta_{n_1}}_t \rangle \|<\frac{\varepsilon}{2} \mathrm{\ for\ } n \geq n_1,
\end{equation}
\begin{equation}\label{epoladruga}
\|\langle x^{\beta_n}_t, y_t \rangle - \langle x^{\beta_{n_2}}_t, y_t \rangle \|<\frac{\varepsilon}{2} \mathrm{\ for\ } n \geq n_2.
\end{equation}
Let $n_0=\mathrm{max} \{n_1,n_2 \}$ and $m,n \geq n_0$. We have, by (\ref{epolaprva}) and (\ref{epoladruga})
$$\begin{aligned}\| x^{\beta_n}_t-&x^{\beta_m}_t \|_{E_t}^2=\| \langle x^{\beta_n}_t-x^{\beta_m}_t,x^{\beta_n}_t-x^{\beta_m}_t \rangle
                            \|_{\mathcal{B}}=\\
=&\| \langle x^{\beta_n}_t,x^{\beta_n}_t \rangle - \langle x^{\beta_m}_t,x^{\beta_m}_t \rangle +\\
        &\hskip1cm +\langle x^{\beta_m}_t, x^{\beta_m}_t - x^{\beta_n}_t \rangle +  \langle  x^{\beta_m}_t - x^{\beta_n}_t, x^{\beta_m}_t
            \rangle\| \leq\\
        &\leq \|\langle x^{\beta_n}_t, x^{\beta_n}_t \rangle - \langle x^{\beta_{n_0}}_t, x^{\beta_{n_0}}_t \rangle \|
            + \|\langle x^{\beta_m}_t, x^{\beta_m}_t \rangle - \langle x^{\beta_{n_0}}_t, x^{\beta_{n_0}}_t \rangle \|+\\
        &\hskip1cm+2\|\langle x^{\beta_m}_t, x^{\beta_m}_t \rangle - \langle x^{\beta_{n_0}}_t, x^{\beta_{n_0}}_t \rangle \|
            + 2\|\langle x^{\beta_{n_0}}_t, x^{\beta_{n_0}}_t \rangle - \langle x^{\beta_{n_0}}_t, x^{\beta_n}_t
            \rangle\|+\\
        &\hskip1cm+ 2\|\langle x^{\beta_m}_t, x^{\beta_n}_t \rangle - \langle x^{\beta_{n_0}}_t, x^{\beta_n}_t \rangle\|<8\varepsilon,
\end{aligned}$$
It follows that $(x^{\beta_n}_t)$ is convergent in Hilbert $\mathcal{B}-\mathcal{B}$ module $E_t$. Its limit
we denote by
\begin{equation}\label{ostaje}
\lim\limits_{n \rightarrow {+\infty}} x^{\beta_n}_t=u_t \in E_t.
\end{equation}
By (\ref{velikoOprva}) and (\ref{velikoOdruga}), we get
$$\langle u_t,\bullet u_t \rangle=\mathrm{id}_{\mathcal{B}}+tK+O(t^2),$$
$$\langle u_t,\bullet y_t \rangle=\mathrm{id}_{\mathcal{B}}+tK_y+O(t^2).$$

To conclude that $u_t$ is unit, we only need to apply limit (as $n\to\infty$) to relation
$$x^{\beta_n}_t\otimes x^{\beta_n}_s=x^{\beta_n}_{s+t}.$$
From (\ref{jezgro K}) and (\ref{jezgro Ky}) we find that
$$
\lim\limits_{n \rightarrow {+\infty}} \| \langle x^{\beta_n}-u, x^{\beta_n}-u \rangle_1 \|=0,
$$
i.e.
$$\lim\limits_{n \rightarrow {+\infty}} [x^n]=[u],$$
in $\mathcal{U}/_{\sim}$.
Therefore $\mathcal{U}/_{\sim}$ is a Hilbert $\mathcal{B}$-module.

If $E$ is only subspatial, we can embed it, into a spatial system $\hat E$ with central unit $\omega$. We can
apply the previous case. The only question is whether the limit unit belongs to $E\le \hat E$. However it
immediately follows from (\ref{ostaje}).
\end{proof}

\begin{prp}\label{cepanje niza}
If $E$ is a subspatial product system, then $\mathcal U$ is (algebraically) isomorphic to
$\ind(E)\oplus\mathcal B$ as right $\mathcal B$ module. If $E$ is, in addition, spatial, then $\mathcal U$ is
isomorphic to $\ind(E)\oplus\mathcal B$ as left-right $\mathcal B-\mathcal B$ module.
\end{prp}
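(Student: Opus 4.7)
The plan is to construct an explicit bijection $\Phi:\mathcal U\to\ind(E)\oplus\mathcal B$ and verify it respects the appropriate module operations. By Lemma \ref{normirana jedinica} I may replace $\omega$ by $\omega^{-\L^{\omega,\omega}(1)/2}$ and therefore assume from the outset that $\omega$ is unital, so $\L^{\omega,\omega}(1)=0$; in the spatial case this replacement also preserves centrality. For each $x\in\mathcal U$ set
$$\beta_x:=\L^{\omega,x}(1)\in\mathcal B,\qquad \Phi(x):=\bigl([x],\beta_x\bigr).$$

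That $\Phi$ is a bijection is essentially the content of Proposition \ref{sim-approx}: every equivalence class has the form $\{y^\beta\mid\beta\in\mathcal B\}$, and the identity
$$\L^{\omega,y^\beta}(1)=\bigl(\L^{y^\beta,\omega}(1)\bigr)^*=\bigl(\L^{y,\omega}(1)+\beta^*\bigr)^*=\beta_y+\beta$$
(from (\ref{stepenovanje beta}) combined with (\ref{obrat})) shows that within each fibre the assignment $\beta\mapsto y^\beta$ is bijective, with $\beta_{y^\beta}$ running over all of $\mathcal B$. The inverse of $\Phi$ therefore sends $([y],\gamma)$ to $y^{\gamma-\beta_y}$, which a short check shows is independent of the chosen representative $y$ of $[y]$.

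Next I verify that $\Phi$ is a right $\mathcal B$-module homomorphism assuming only that $\omega$ is unital (and not necessarily central), which is what is needed for the subspatial case. Combining (\ref{sabiranje}) and (\ref{obrat}),
$$\L^{\omega,x+y}(1)=\L^{\omega,x}(1)+\L^{\omega,y}(1)-\L^{\omega,\omega}(1)=\beta_x+\beta_y,$$
while (\ref{mnozenje}) with (\ref{obrat}) gives
$$\L^{\omega,x\cdot a}(1)=\bigl(a^*\L^{x,\omega}(1)+(1-a)^*\L^{\omega,\omega}(1)\bigr)^*=\beta_x\,a.$$
Since by Theorem \ref{modul} both $+$ and $\cdot a$ descend to $\mathcal U/\sim=\ind(E)$, these identities yield $\Phi(x+y)=\Phi(x)+\Phi(y)$ and $\Phi(x\cdot a)=\Phi(x)\cdot a$, proving the subspatial case.

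When $E$ is spatial, so that $\omega$ is central (and unital), I additionally apply (\ref{mnozenje-levo}) together with the central-unit identity (\ref{svojstva centralne}):
$$\L^{a\cdot x,\omega}(1)=\L^{x,\omega}(a^*)+\L^{\omega,\omega}(1-a^*)=\L^{x,\omega}(1)\,a^*.$$
Taking adjoints gives $\L^{\omega,a\cdot x}(1)=a\beta_x$ and hence $\Phi(a\cdot x)=a\cdot\Phi(x)$, promoting $\Phi$ to a bimodule isomorphism. The main obstacle is just bookkeeping with the kernel formulas, keeping track of adjoints carefully; the essential content lies in Proposition \ref{sim-approx}, and the asymmetry between subspatial and spatial cases arises precisely because the left-multiplication computation requires the centrality formulas (\ref{svojstva centralne}).
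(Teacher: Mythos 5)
Your proof is correct and is essentially the paper's own argument in different packaging: the paper splits the short exact sequence $0\to\mathcal B\to\mathcal U\to\ind(E)\to0$ via the very same map $j(x)=\L^{\omega,x}(1)$ that you call $\beta_x$, and the verifications ($\L^{\omega,\omega}(1)=0$ from unitality for additivity and right multiplication, the centrality identity (\ref{svojstva centralne}) for left multiplication) coincide with yours. The only cosmetic difference is that you assemble the explicit isomorphism $x\mapsto([x],\beta_x)$ directly instead of invoking the splitting of the sequence.
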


\begin{proof}
We can assume that $\omega$ is unital, since the index does not depend on $\omega$.

By the previous Theorem and Proposition, we have the short exact sequence of Hilbert modules.
$$0\rightarrow\mathcal B\overset i\hookrightarrow\mathcal U\overset\pi\rightarrow\ind(E)\rightarrow0$$
where $i(\beta)=\omega^\beta$, and $\pi$ is the canonical projections. We shall show that this sequence
splits, constructing the homomorphism $j:\mathcal U\to\mathcal B$ by
$$j(x)=\L^{x,\omega}(1)^*=\L^{\omega,x}(1).$$
This mapping satisfies
$$j(x+y)=(\L^{x,\omega}(1)+\L^{y,\omega}(1)-\L^{\omega,\omega}(1))^*=j(x)+j(y),$$
$$j(x\cdot a)=\left(a^*\L^{x,\omega}(1)+(1-a^*)\L^{\omega,\omega}(1)\right)^*=j(x)a,$$
since $\omega$ is unital, implying $\L^{\omega,\omega}(1)=0$.

If, in addition, $\omega$ is central, then
$$j(a\cdot x)=\left(\L^{x,\omega}(a^*)+\L^{\omega,\omega}(1-a^*)\right)^*=(\L^{x,\omega}(1)a^*)^*=aj(x),$$
since for central unital unit $\omega$ there holds $\L^{x,\omega}(a)=\L^{x,\omega}(1)a$ (Lemma \ref{svojstva
centralne}).

To finish the proof, note that $j\circ i(\beta)=j(\omega^\beta)=\beta$.
\end{proof}

The following Corollary was proved in \cite[Theorem 3.5.2]{JFA04} under additional assumption that $\mathcal
B$ is a von Neumann algebra and in full generality in \cite[Theorem 5.2]{SK03}. Here, we give an easy proof
that does not use Kolmogorov decomposition of completely positive definite kernels.

\begin{cor} Let $x$ be a continuous unit on some product system $E$ over $\mathcal B$. If $E$ can be embedded
in some spatial product system, then the generator of CPD semigroup $\skp{x_t}{bx_t}$ has the
Christensen-Evans form, that is
$$\L^{x,x}(b)=\skp{\zeta_x}{b\zeta_x}+\beta_x^*b+b\beta_x,$$
where $\zeta_x$ is element of some Hilbert left-right $\mathcal B-\mathcal B$ module, and $\beta_x\in\mathcal
B$.

Moreover, the generator of $\skp{x_t}{by_t}$ has the form
$$\L^{x,y}(b)=\skp{\zeta_x}{b\zeta_y}+\beta_x^*b+b\beta_y.$$
\end{cor}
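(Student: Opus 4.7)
The plan is to read off $\zeta_x$ and $\beta_x$ directly from the Hilbert-module structure on $\ind(\hat E)$, where $\hat E\supseteq E$ is a spatial extension. By Lemma \ref{normirana jedinica} applied inside $\hat E$, we may replace the central unit of $\hat E$ by its normalization, so $\omega$ can be taken central and unital. Consequently $\mathcal L^{\omega,\omega}(1)=0$, and centrality combined with (\ref{svojstva centralne}) and (\ref{obrat}) gives
$$\mathcal L^{\omega,\omega}\equiv 0,\qquad \mathcal L^{x,\omega}(b)=\mathcal L^{x,\omega}(1)\,b,\qquad \mathcal L^{\omega,y}(b)=b\,\mathcal L^{\omega,y}(1).$$

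Now declare $\beta_y:=\mathcal L^{\omega,y}(1)\in\mathcal B$; then by (\ref{obrat}) we have $\beta_x^{*}=\mathcal L^{x,\omega}(1)$. Let $\zeta_x:=[x]\in\ind(\hat E)$, which by Theorem \ref{kompletnost} is a genuine Hilbert $\mathcal B$-$\mathcal B$ module requiring no completion. Plugging the three identities above into the defining formula (\ref{skpdef}) for $\langle x,y\rangle_b$ and rearranging immediately yields
$$\mathcal L^{x,y}(b)=\langle x,y\rangle_b+\beta_x^{*}b+b\beta_y.$$

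It remains to identify $\langle x,y\rangle_b$ with the Hilbert-module pairing $\langle\zeta_x,\,b\zeta_y\rangle$ in $\ind(\hat E)$. By Proposition \ref{skp}(\ref{skp-vi}), for every $b\ge 0$ one has $\langle x,y\rangle_b=\langle x,\,b\cdot y\rangle_1=\langle\zeta_x,\,b\zeta_y\rangle$, using that $[b\cdot y]=b\zeta_y$ by construction of the left action. Both sides are $\mathbb C$-linear in $b$, and every element of $\mathcal B$ is a complex combination of four positive elements, so the equality extends to arbitrary $b\in\mathcal B$. Specializing $y=x$ gives the first displayed formula, and the general $y$ gives the second.

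There is no deep obstacle: once the apparatus of Theorem \ref{modul}, Proposition \ref{skp} and Theorem \ref{kompletnost} is in hand, the Christensen--Evans decomposition falls out of the very definition (\ref{skpdef}) of the pairing combined with the simplifying identities enjoyed by a central unital $\omega$. The only point that deserves a moment's care is the passage from $b\ge 0$ to arbitrary $b$ by linearity, which is legitimate because the left action $b\mapsto b\cdot y$ on $\mathcal U$ is $\mathbb C$-linear by Theorem \ref{modul}(a).
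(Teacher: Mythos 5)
Your proposal is correct and follows essentially the same route as the paper: embed $E$ into a spatial $\hat E$ with central unital $\omega$, set $\beta_x=\L^{\omega,x}(1)$ (the paper's $j(x)$) and $\zeta_x=[x]\in\ind(\hat E)$, and read off the Christensen--Evans form from the identity $\skp x{b\cdot y}=\L^{x,y}(b)-\L^{x,\omega}(b)-\L^{\omega,y}(b)+\L^{\omega,\omega}(b)$ together with the centrality simplifications. The only cosmetic difference is that you reach this identity via Proposition \ref{skp}(\ref{skp-vi}) for positive $b$ and then extend by $\mathbb C$-linearity, whereas the paper obtains it for arbitrary $b$ by a direct computation.
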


\begin{proof} Let $E\le\hat E$ and let $\omega$ be a central unital unit in $\hat E$. A straightforward
calculation gives
$$\skp x{b\cdot y}=\L^{x,y}(b)-\L^{x,\omega}(b)-\L^{\omega,y}(b)+\L^{\omega,\omega}(b).$$
Using the fact that $\omega$ is central and unital, we get $\L^{x,\omega}(b)=\L^{x,\omega}(1)b=j(x)^*b$,
$\L^{\omega,y}(b)=b\L^{\omega,y}(1)=bj(y)$ and $\L^{\omega,\omega}(b)=0$, where $j$ is the mapping from
Proposition \ref{cepanje niza}. Thus we obtain
$$\L^{x,y}(b)=\skp x{b\cdot y}+j(x)^*b+bj(y)=\skp{[x]}{b[y]}+j(x)^*b+bj(y),$$
which finishes the proof.\end{proof}

\section{Examples, Remarks}\label{secremarks}

Following two Examples demonstrate that $\ind(E)$ defined in this note is a generalization of the notion of
index defined by Arveson in the case $\mathcal B=\mathrm C$ in \cite{Arv89}, and by Skeide in the case when
$E$ is a spatial product system \cite{KPR06}.

\begin{exm}Let $E$ be an Arveson product system, i.e.~product system over $\mathbb C$. Then $\mathcal U/_{\sim}$
is isomorphic to a vector space of dimension $\mathrm{ind}(E)$. Indeed, as any Arveson product system
contains a unique maximal type I subsystem of the same index (namely, the system generated by its units), we
may assume that $E$ is generated by some continuous set of units. By \cite[Theorem 6.7.1]{Arv03} and
\cite[Proposition 3.1.5]{Arv03}, $E$ is isomorphic to the concrete product system of the CCR flow of rank
$n=\mathrm{ind}(E)$, acting on $\mathcal{B} ( e^{L^2((0,\infty),K) })$ where $K$ is a Hilbert space of
dimension $n$. By \cite[Theorem 2.6.4]{Arv03}, $\mathcal U/_{\sim}=\{[U^{\zeta}],\ \zeta \in K \}$, where
$U^{\zeta}$ are units defined by $U^{\zeta}_t(\mathrm{exp}(f))=\mathrm{exp}(\chi_{(0,t)}\otimes \zeta +
S_tf), \ t\geq 0,\ f \in L^2((0,\infty);K)$, and $(S_t)$ is the shift semigroup of index $n$ that acts on
$L^2((0,\infty);K)$ by way of
$$S_tf(x)=\begin{cases}f(x-t),&\ x>t\\0,&\ 0<x\leq t.\end{cases}$$
Taking the unit $U^0$ for $\omega$ we see that
$\skp{[U^{\zeta}]}{[U^{\eta}]}=\skp{U^{\zeta}}{U^{\eta}}=\skp{\zeta}{\eta}$. Noting that $\skp{
[U^{\zeta+\eta}]-[U^{\zeta}+U^{\eta}]}{[U^{\zeta+\eta}]-[U^{\zeta}+U^{\eta}]}=0$ and
$\skp{[U^{a\zeta}]-[aU^{\zeta}]}{[U^{a\zeta}]-[aU^{\zeta}]}=0$, for $a\in \mathbb{C}$, we obtain that
$[U^{\zeta+\eta}]=[U^{\zeta}+U^{\eta}]$ and $[U^{a\zeta}]=[a U^{\zeta}]$. Hence, $K\ni\zeta
\mapsto[U^{\zeta}]\in \mathcal U/_{\sim}$ is an isomorphism.

Since the dimension of vector space completely determines it (up to isomorphism), it allows us to consider
the $\mathcal B$-module $\ind(E)$ as a suitable generalization of index.
\end{exm}

\begin{exm}Let $\G(F)$ be the time ordered Fock module where $F$ is a two sided Hilbert module
over $\mathcal B$. In \cite[Theorems 3 and 6]{LS01} it was proved that all continuous units in $\G(F)$ can be
parameterized by the set $F\times\mathcal B$. The unit that corresponds to pair $(\zeta,\beta)$ denote by
$u(\zeta,\beta)$. The corresponding kernels are given by (see \cite[formula (3.5.2)]{JFA04})
\begin{equation}\label{Fock kernel}
\L^{u(\zeta,\beta),u(\zeta',\beta')}(b)=\skp\zeta{b\zeta'}+\beta^*b+b\beta'
\end{equation}
Comparing the kernels, we can conclude that the mapping $F\times\mathcal B\ni(\zeta,\beta)\to
u(\zeta,\beta)\in\mathcal U_{\G(F)}$ is an (algebraic) isomorphism of modules, if we choose $\omega=u(0,0)$.
Also, it is easy to see that $u(\zeta,\beta)^\gamma=u(\zeta,\beta+\gamma)$, so that $\mathcal
U_{\G(F)}/\sim=\{[u(\zeta,0)]\;|\;\zeta\in F\}$ and therefore $\ind(\G(F))\cong F$, in algebraic sense.

Further, from (\ref{Fock kernel}) we easily get
$$\skp{u(\zeta,\beta)}{u(\zeta',\beta')}=\skp\zeta{\zeta'}.$$

Thus $\ind(\G(F))$ is isomorphic to $F$ as Hilbert left-right module. Therefore, our definition of index
generalizes that of Skeide \cite{KPR06}.
\end{exm}

\begin{rmr} It would be interesting to compute the index of subspatial system exhibited in \cite[Section
3]{PAMS10}, that is not spatial.
\end{rmr}

Next Example is the Example of a product system without any central unit.

\begin{exm}In \cite[Example 4.2.4]{JFA04}, there is an example of product system that does not contain any
central unit. In more details, let $\mathcal{B}=K(G)+\mathbb{C} {\id}_G$ be the unitization of compact
operators on some infinite-dimensional Hilbert space $G$ and let $h\in B(G)$ be a self-adjoint operator. The
Hilbert $\mathcal{B}-\mathcal{B}$ modules $\mathcal{B}_t$ defined to coincide with $\mathcal{B}$ as right
Hilbert modules and with left multiplication $b\cdot x_t=e^{ith}be^{-ith}x_t$ form a product system
$(\mathcal{B}_t)_{t\ge0}$ with identification $x_s\otimes y_t=e^{ith}x_s e^{-ith}y_t$. Such product system
does not admit a central unit if and only if $h\notin\mathcal B$, and it is generated by the single unit
$1_t\equiv 1$ and hence it is of type $I$, as it was shown in \cite[Example 4.2.4]{JFA04}.

Let $\xi_t$ be an arbitrary continuous unit and let $\xi'_t=e^{-ith}\xi_t$. Obviously, $\xi'_t$ is uniformly
continuous family. We have
$$\xi'_{s+t}=e^{-i(s+t)h}\xi_s\otimes\xi_t=e^{-i(s+t)h}e^{ith}\xi_se^{-ith}\xi_t=\xi'_s\xi'_t.$$
It follows that $\xi'_t=e^{tB_\xi}$ for some bounded operator $B_\xi$ on $G$. Set $A_\xi=B_\xi+ih$ and we
obtain that any continuous unit on $(\mathcal B_t)_{t\ge0}$ has a form
$$\xi_t=e^{ith}e^{t(A_{\xi}-ih)},$$
for some $A_{\xi}$. Moreover, we find that
$$A_\xi=\lim_{t\to0+}\left[\frac{e^{ith}-1}te^{t(A_\xi-ih)}+\frac{e^{t(A_\xi-ih)}-1}t\right]=
\lim_{t\to0+}\frac{e^{ith}e^{t(A_\xi-ih)}-1}t\in\mathcal B,$$
because the last fraction belongs to $\mathcal B$, and the limit converges uniformly, since exponentials are
analytic functions.

Pick the unit $\omega$ choosing $A_{\omega}=0$. Then $\omega_t=1$. Noting that
$\L^{\xi,\eta}={\id}_{\mathcal{B}}(A_{\eta}-ih)+(A_{\xi}^*+ih){\id}_{\mathcal{B}}$, we see that
$\skp{[\xi]}{[\eta]}=\skp{\xi}{\eta}=0$ for every $[\xi],[\eta]\in\mathcal U/_{\sim}$. Hence, $\mathcal
U/_{\sim}=\{0\}$.
\end{exm}

\begin{rmr}
This example shows that product systems, even of type $I$ can not be classified by its index. Namely, for
$h\in\mathcal B$ it has a central unit, and for $h\notin\mathcal B$ it has not. Therefore such product
systems are not isomorphic, in spite of the fact that they have the same index.

In \cite[Theorem 4.8]{PAMS10} it was shown that the product system from previous Example is not subspatial,
by proving that the kernel of the unit $1$ has no Christensen-Evans form. This is, up to our knowledge, the
only example of nonsubspatial product system. However for this system, Propositions \ref{sim-approx},
\ref{cepanje niza} and Theorem \ref{kompletnost} remains valid. This example is trivial, (despite of twisted
left action of $\mathcal B$) in the sense that all $E_t$ are isomorphic (as right modules) to the algebra
$\mathcal B$ itself. On the other hand, index is constructed to "measure" how many dimensions the continuous
units can generate (in a certain sense) after taking quotient by $\mathcal B$.

It is, therefore, natural to ask are there product systems (of course that are not subspatial) for which
Propositions \ref{sim-approx}, \ref{cepanje niza} and Theorem \ref{kompletnost} fails.
\end{rmr}

\begin{rmr}The other problem that arises from this example is what is actually, the trivial product system.
Following Skeide, it must be this example with $h=0$ (as well as any example where $E_t\cong\mathcal B$ - the
algebra itself, and left and right multiplication are canonical). However, then we have problem how to define
short exact sequences in the category of product systems. Namely, injective morphisms have trivial kernel,
i.e.\ isomorphic to $\{0\}$ at each fiber. Such product system has no continuous units, since it must be
equal to $1$ at time $t=0$. Thus, we are forced to consider injective morphisms modulo trivial systems. In
this case, the previous Example can not be seen from short exact sequences. Therefore, in the absence of
suitable definition we can not speak about exact functoriality of index.
\end{rmr}

\end{document}